\numberwithin{equation}{section}
\newtheorem{theorem}{Theorem}[section]
\newtheorem{proposition}[theorem]{Proposition}
\newtheorem{lemma}[theorem]{Lemma}
\newtheorem{corollary}[theorem]{Corollary}
\newtheorem{remark}[theorem]{Remark}
\newtheorem{definition}{Definition}
\newcommand{\set}[1]{\left\lbrace #1\right\rbrace}
\providecommand{\abs}[1]{\left\lvert#1\right\rvert}
\providecommand{\norm}[1]{\left\lVert#1\right\rVert}
\begin{document}

\title[Stabilization of the Gear--Grimshaw system]{Stabilization of the Gear--Grimshaw system with weak damping}
\author[Capistrano--Filho]{R. A. Capistrano--Filho}
\address{Departamento de Matemática, Universidade Federal de Pernambuco, UFPE, CEP 50740-545, Recife, PE, Brazil.}
\email{capistranofilho@dmat.ufpe.br}

\subjclass[2010]{Primary: 35Q53, Secondary: 37K10, 93B05, 93D15}
\keywords{Coupled KdV equation, stabilization, decay rate, Gear--Grimshaw system, weak damping}
\date{Version 2016-12-02-a}

\begin{abstract}
The aim of this work is to consider the internal stabilization of a nonlinear coupled system of two Korteweg--de Vries equations in a finite interval under the effect of a  very weak localized damping. The system was introduced by Gear and Grimshaw to model the interactions of two-dimensional, long, internal gravity waves propagation in a stratified fluid. Considering feedback controls laws and using \textit{"Compactness--Uniqueness Argument"}, which reduce the problem to use a unique continuation property, we establish the exponential stability of the weak solutions when the exponent in the nonlinear term ranges over the interval $[1,4)$.
\end{abstract}

\maketitle

\section{Introduction\label{Sec0}}
\subsection{Setting of the problem} In \cite{geargrinshaw}, a complex system of equations was derived by Gear and Grimshaw to model the strong interaction of two-dimensional, long, internal gravity waves propagating on neighboring pycnoclines in a stratified fluid. It has the structure of a pair of Korteweg-de Vries equations coupled through both dispersive and nonlinear effects and has been the object of intensive research in recent years. In particular, we also refer to \cite{bona} for an extensive discussion on the physical relevance of the system.

An interesting possibility now presents itself is the study of the stability properties when the model is posed on a bounded domain $(0,L)$. In this paper, we are mainly concerned with the study of the Gear-Grimshaw system
\begin{equation}
\label{gg1}
\begin{cases}
u_t + uu_x+u_{xxx} + a_3v_{xxx} + a_1vv_x+a_2 (uv)_x =0, & \text{in} \,\, (0,L)\times (0,\infty),\\
c v_t +rv_x +vv_x+a_3b_2u_{xxx} +v_{xxx}+a_2b_2uu_x+a_1b_2(uv)_x  =0,  & \text{in} \,\, (0,L)\times (0,\infty), \\
u(x,0)= u^0(x), \quad v(x,0)=  v^0(x), & \text{in} \,\, (0,L),
\end{cases}
\end{equation}
satisfying the following boundary conditions
\begin{equation}\label{gg2}
\begin{cases}
u(0,t)=0,\,\,u(L,t)=0,\,\,u_{x}(L,t)=0, & \text{in} \,\, (0,\infty),\\
v(0,t)=0,\,\,v(L,t)=0,\,\,v_{x}(L,t)=0, & \text{in} \,\, (0,\infty),
\end{cases}
\end{equation}
where $a_1, a_2, a_3, b_2, c, r\in \mathbb{R}$. We also assume that
$$1-a_3^2 b_2 > 0 \quad \text{and} \quad  b_2, c > 0.$$

The purpose is to see whether one can force the solutions of those systems to have certain desired properties by choosing appropriate damping mechanism. More precisely, we study the following fundamental problem related to the asymptotic behavior of the solutions for $t$ sufficiently large:

\vglue 0.1 cm

\noindent\textbf{Stabilization problem: }\textit{Can one find two feedback controls laws: $f=Gu$ and $g=Gv$ so that the system
\begin{equation}\label{1-1}
\left\{
\begin{array}{l}
\vspace{1mm}u_t+u_{xxx}+a_3v_{xxx}+a(u)u_x + a_1vv_x + a_2(uv)_x + Gu =  0 \\
b_1v_t + rv_x + v_{xxx} + b_2a_3u_{xxx} + a(v)v_x + b_2a_2uu_x + b_2a_1(uv)_x + Gv = 0,
\end{array}
\right.
\end{equation}
with boundary condition \eqref{gg2}, is asymptotically stable as $t\rightarrow + \infty$ ?}

\vglue 0.1 cm
\noindent If such the feedback controls laws exists, then the system \eqref{1-1}-\eqref{gg2} is said to be \textbf{stabilizable}.

\subsection{State of art} In what concerns the stabilization problems, most of the works have been focused on a bounded interval with a localized internal damping (see, for instance, \cite{PazSou} and the references therein). However, the stabilization results for system \eqref{gg1}-\eqref{gg2} was first obtained in \cite{Dav3}, when the authors considered the system in a periodic domain. Recently, Capistrano--Filho \textit{et al.} \cite{cf-pazoto-komornik} proved a result which extends the result proved by Dávila \cite{Dav3}, which one was proved only for $s\le 2$. More precisely, in \cite{cf-pazoto-komornik}, they showed that for any fixed  integer $s\ge 3$, the solutions are exponentially stable in the Sobolev spaces
\begin{equation*}
H^{s}_{p}(0,1):=\set{u\in H^s(0,1)\ :\ \partial^n_x u(0)=\partial_x^n u(1),\quad n=0,\ldots,s}
\end{equation*}
with periodic boundary conditions.

\vglue 0.2 cm
\noindent
{\bf Theorem $\mathcal{A}$ }(Capistrano--Filho \textit{et al.} \cite{cf-pazoto-komornik}) {\em Consider
\begin{equation}\label{13}
b_1=b_2=1
\end{equation}
and 
\begin{equation}\label{14}
r=0,\quad a_1^2+a_2^2=a_1+a_2,\quad \abs{a_3}<1,\quad\text{and}\quad  (a_1-1)a_3=(a_2-1)a_3=0.
\end{equation}

If $\phi,\psi\in H^{s}_{p}(0,1)$ for some integer $s\ge 3$, then the solution of 
\begin{equation}\label{11}
\begin{cases}
u_t+uu_x+u_{xxx}+a_3v_{xxx}+a_1vv_x+a_2(uv)_x+k(u-[u])=0,\\
v_t+vv_x+v_{xxx}+a_3u_{xxx}+b_2a_2uu_x+b_2a_1(uv)_x+k(v-[v])=0,\\
u(0,x)=\phi(x),\\
v(0,x)=\psi(x),
\end{cases}
\end{equation}
where $[f]$ denotes the mean value of $f$ defined by
\begin{equation*}
[f]:=\int_0^1f(x)\ dx
\end{equation*}
satisfies the estimate
\begin{equation*}
\norm{u(t)-[u(t)]}_{H^{s}_{p}(0,1)}+\norm{v(t)-[v(t)]}_{H^{s}_{p}(0,1)}=o\left( e^{-k't}\right) ,\quad \text{ as } \quad  t\to\infty
\end{equation*}
for each $k'<k$.
}

\vglue 0.1 cm

The proof of Theorem $\mathcal{A}$  follows the ideas introduced in \cite{KomRusZha1991} for the usual KdV equation by using the infinite family of conservation laws for this equation. Such conservations lead to the construction of a suitable Lyapunov function that gives the exponential decay of the solutions.

Concerning with bounded domain $(0,L)$, recently, Nina \textit{et. al.} \cite{nina} proved that under presence of a localized damping, represented by a function $b=b(x)$, the following system  
\begin{equation}\label{n1}
\begin{cases}
u_t+u_{xxx}+a_3v_{xxx}+a(u)u_x + a_1vv_x + a_2(uv)_x +b(x)u=0, \\
b_1v_t + rv_x + v_{xxx} + b_2a_3u_{xxx} + a(v)v_x + b_2a_2uu_x + b_2a_1(uv)_x +b(x)u=0, \\
u(x,0)= u^0(x), \quad v(x,0)=  v^0(x),
\end{cases}
\end{equation}
where $0<x<L$, $t>0$, with boundary conditions \eqref{gg2} is globally uniformly exponential stable when $b$ satisfies
\begin{equation}\label{b}
\left\{
\begin{array}{l}
\vspace{1mm} b \in L^2 (0,L) \,\hbox { is a nonnegative function, such that }\\
b(x) \geq b_0 > 0 \, \hbox {
a. e. in } \, \omega, \mbox{ where } \omega\subset (0,L) \mbox{ is a
nonempty open set. }
\end{array}
\right.
\end{equation}
More precisely, they proved the following result:

\vglue 0.2 cm
\noindent
{\bf Theorem $\mathcal{B}$ }(Nina \textit{et al.} \cite{nina}) {\em Let $a=a(x)$ be a $\mathcal{C}^2$ function such that
$$|a(x)|\leq C(1+|x|^p),\,\,|a'(x)|\leq C(1+|x|^{p-1}),\,\,|a''(x)|\leq C(1+|x|^{p-2}), \quad\forall\,x\in \mathbb{R}$$
where $C$ is a positive constant and $1\leq p < 4$. Then, if $b$ satisfies \eqref{b}, system \eqref{n1}-\eqref{gg2} is globally uniformly exponential stable.}

\vglue 0.1 cm

The techniques used to prove this result are different from those used in the proof of Theorem $\mathcal{A}$. The proof of the Theorem $\mathcal{B}$ is reduced to show a \textit{unique continuation property} one since $b(x)u=b(x)v=0$ implies that $(u,v)\equiv (0,0)$ in $\{b(x)>0\}\times (0,T)$. However, in this problem, the unique continuation property can not be applied directly. To overcome this problem the authors developed a Carleman inequality which allows the authors prove directly the unique continuation of weak solution.

\subsection{Main result} In this work we treat a very special case in which the source terms $f$ and $g$, introduced in the equation \eqref{1-1}, are defined by the operators
\begin{equation}\label{operator_new}
Gu=1_{\omega}\left(u(t,x)-\frac{1}{|\omega|}\int_{\omega}u(t,x)dx\right)
\end{equation}
and
\begin{equation}\label{operator_new_1}
Gv=1_{\omega}\left(v(t,x)-\frac{1}{|\omega|}\int_{\omega}v(t,x)dx\right),
\end{equation}
respectively. Here $\omega\subset(0,L)$ is a nomempty open set and $1_{\omega}$ denotes the characteristic function of the set $\omega$. 

We assume that $a=a(s)$ is real-valued function that satisfying the condition
\begin{equation}\label{a}
\left\{
\begin{array}{l}
\vspace{1mm}a(0)=0, \quad |a^{(j)}(s)|\leq c\,(1 + |s|^{p-j}), \forall \,s\in
\mathbb{R},\\
\vspace{1mm}\mbox{where } c \mbox{ is a positive constant and } j=0,1\, \mbox{ if }\, 1\leq p < 2\\
\mbox{and } j=0,1,2\, \mbox{ if }\, p\geq 2.
\end{array}
\right.
\end{equation}

Let us consider the total energy associated to (\ref{1-1}), in this case
\begin{equation}\label{6a}
E_s(t) = \frac{1}{2}\int_0^L (b_2u^2 + b_1v^2) dx.
\end{equation}
Then, we can (formally) verify that

\begin{equation}\label{dissipation}
\begin{array}{l}
\displaystyle\frac{1}{2}\frac{d}{dt}\int_0^L (b_2u^2 + b_1v^2) dx = -\left[\frac{b_2}{2}u_x^2(0,t) + \frac{1}{2}v_x^2(0,t) + a_3b_2u_x(0,t)v_x(0,t)\right]\\-\left(b_2||Gu||^2_{L^2(\omega)} + ||Gv||^2_{L^2(\omega)}\right) = -\displaystyle\frac{1}{2}\left(\sqrt{b_2}u_x(0,t)+\sqrt{a^2_3b_2}v_x(0,t)\right)^2
\\ -\vspace{1mm}\displaystyle\frac{1}{2}(1-a^2_3b_2)v^2_x(0,t)-\left(b_2||Gu||^2_{L^2(\omega)} + ||Gv||^2_{L^2(\omega)}\right)\leq 0,
\end{array}
\end{equation}
for any $t>0$. The inequality above shows that the terms $Gu$ and $Gv$ plays the role of a damping mechanisms and, consequently, we can investigate whether the solutions of \eqref{1-1}-\eqref{gg2} tend to zero as $t\rightarrow \infty$ and under what rate they decay.

Thus, the main result of this work gives a answer to the stabilization problem proposed on the beginning of this paper and can be read as follows.

\begin{theorem}\label{main-dec} Let $a=a(x)$ be a $\mathcal{C}^2$ function such that
$$|a(x)|\leq C(1+|x|^p),\,\,|a'(x)|\leq C(1+|x|^{p-1}),\,\,|a''(x)|\leq C(1+|x|^{p-2}), \quad\forall\,x\in \mathbb{R}$$
where $C$ is a positive constant and $1\leq p < 4$. Then, there exist positive constants $C$ and $k$, such that for any
$(u^0,v^0)\in [L^2(0,L)]^2$ with
\[
E_s(0)\leq R_0,
\]
the corresponding solution $(u,v)$ of
\begin{equation}\label{1-1a}
\left\{
\begin{array}{l}
\vspace{1mm}u_t+u_{xxx}+a_3v_{xxx}+a(u)u_x + a_1vv_x + a_2(uv)_x + Gu =  0 \\
b_1v_t + rv_x + v_{xxx} + b_2a_3u_{xxx} + a(v)v_x + b_2a_2uu_x + b_2a_1(uv)_x + Gv = 0,
\end{array}
\right.
\end{equation}
satisfying the following boundary conditions
\begin{equation}\label{gg2a}
\begin{cases}
u(0,t)=0,\,\,u(L,t)=0,\,\,u_{x}(L,t)=0,\\
v(0,t)=0,\,\,v(L,t)=0,\,\,v_{x}(L,t)=0,
\end{cases}
\end{equation}
where $0<x<L$, $t>0$, $Gu$ and $Gv$ are defined by \eqref{operator_new} and \eqref{operator_new_1}, respectively, satisfies
\begin{equation}
E_s(t)\leq Ce^{-kt}E_s(0)\text{, }\quad \forall t\geq 0.
\label{m3}
\end{equation}
\end{theorem}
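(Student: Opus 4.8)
The strategy is the classical compactness–uniqueness argument adapted to the dissipation law \eqref{dissipation}. The plan is to reduce the exponential decay estimate \eqref{m3} to an \emph{observability inequality}: given $T>0$ and $R_0>0$, there exists $C=C(T,R_0)>0$ such that every weak solution with $E_s(0)\le R_0$ satisfies
\begin{equation*}
E_s(0)\le C\int_0^T\left(b_2u_x^2(0,t)+v_x^2(0,t)+b_2\norm{Gu}^2_{L^2(\omega)}+\norm{Gv}^2_{L^2(\omega)}\right)dt.
\end{equation*}
Once this is available, integrating \eqref{dissipation} over $[0,T]$ gives $E_s(T)=E_s(0)-\int_0^T(\text{dissipated energy})\le E_s(0)$, while the observability inequality bounds the dissipated energy from below by $C^{-1}E_s(0)$; hence $E_s(T)\le(1-C^{-1})E_s(0)$ with $1-C^{-1}<1$, and the semigroup property together with the dissipativity $E_s$ nonincreasing yields \eqref{m3} by the standard iteration over the intervals $[nT,(n+1)T]$. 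One first needs the well-posedness of \eqref{1-1a}--\eqref{gg2a} in $[L^2(0,L)]^2$ together with the hidden regularity $u_x(0,\cdot),v_x(0,\cdot)\in L^2(0,T)$ (a Kato-type smoothing estimate), which I would establish by the usual multiplier method on smooth solutions and a density argument; for the nonlinearity, the growth condition \eqref{a} with $1\le p<4$ combined with the Gagliardo–Nirenberg inequality controls the terms $a(u)u_x$, etc., in $L^1(0,T;L^2)$ from an $L^2_tH^1_x\cap L^\infty_tL^2_x$ a priori bound (the exponent $4$ is exactly the threshold where this energy space closes the estimate).

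To prove the observability inequality I argue by contradiction: suppose it fails, so there is a sequence of solutions $(u^n,v^n)$ with $E^n_s(0)\le R_0$, $E^n_s(0)\ge n\cdot(\text{dissipated energy of }(u^n,v^n))$. Writing $\lambda_n:=\sqrt{E^n_s(0)}$ and normalizing, one passes to a subsequence with $\lambda_n\to\lambda\ge 0$. From the energy identity the boundary terms $u^n_x(0,\cdot),v^n_x(0,\cdot)$ and the localized terms $Gu^n,Gv^n$ tend to $0$ in $L^2(0,T)$. Using the a priori bounds in $L^2(0,T;H^1(0,L))\cap L^\infty(0,T;L^2(0,L))$ together with the equation to control $\partial_t u^n,\partial_t v^n$ in a negative-order space, the Aubin–Lions lemma gives strong convergence $(u^n,v^n)\to(u,v)$ in $L^2(0,T;L^2(0,L))$ (up to a subsequence). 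One must split into the two cases $\lambda>0$ and $\lambda=0$: when $\lambda>0$ the limit $(u,v)$ is a nontrivial solution of \eqref{1-1a}--\eqref{gg2a} with $u_x(0,\cdot)=v_x(0,\cdot)=0$ and $Gu=Gv=0$, hence $u=[u]$, $v=[v]$ constant on $\omega\times(0,T)$; when $\lambda=0$ the normalized quantities $(u^n/\lambda_n,v^n/\lambda_n)$ converge to a solution of the \emph{linearized} system with the same vanishing traces and localized terms. In either case the contradiction is reached by invoking a \textbf{unique continuation property}: a solution of the (nonlinear or linearized) Gear–Grimshaw system with $u_x(0,t)=v_x(0,t)=0$ for $t\in(0,T)$ and with $u,v$ spatially constant on $\omega$ must vanish identically. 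This is where I would lean on the Carleman-estimate-based unique continuation of Nina \emph{et al.} \cite{nina} (which handles exactly the growth range $1\le p<4$) adapted to the present feedback terms; the reduction from ``constant on $\omega$'' to ``vanishing on $\omega$'' uses that for solutions of the KdV-type equation a spatially constant function on an open set forces the constant to be zero via the equation itself and the boundary conditions at $x=L$.

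The main obstacle is precisely this unique continuation step, for two reasons. First, the feedback operators $G$ only give information that $u$ and $v$ are \emph{spatially constant} on $\omega$, not that they vanish there, so one has a genuinely weaker hypothesis than in Theorem $\mathcal{B}$ and must argue (using the structure of \eqref{1-1a} restricted to $\omega$, where all the dispersive and transport terms of a constant in $x$ vanish, forcing $u_t=v_t=0$, i.e. the constants are time-independent, and then propagating this rigidity) that the constants are in fact zero before Carleman applies. Second, in the case $\lambda=0$ the unique continuation must be proved for the linearized system, which is a coupled dispersive system with variable-in-$t$ potentials coming from the weak-$\ast$ limits of $a(u^n),\ a(v^n)$, etc.; I would need the Carleman weight to be robust enough to absorb these potentials, which is feasible under \eqref{a} but requires care with the $L^p$-type bounds on the potentials. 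A secondary technical point is checking that the compactness argument is compatible with the low regularity $[L^2(0,L)]^2$ of the data, so that all the convergences (in particular the passage to the limit in the quadratic terms $vv_x$, $(uv)_x$) are justified by the strong $L^2_{t,x}$ convergence plus the weak $L^2_tH^1_x$ convergence.
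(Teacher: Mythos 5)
Your plan is essentially the paper's proof: the same reduction of \eqref{m3} to an observability inequality, the same contradiction--compactness argument (energy and $L^2_tH^1_x$ bounds, control of the time derivatives, Aubin--Lions, passage to the limit in the nonlinear terms), and the same use of the unique continuation property of Nina \emph{et al.}, with the feedback only forcing $u,v$ to be spatially constant on $\omega$, the equation forcing those constants to be time-independent, and the boundary conditions eliminating them; the paper merely normalizes by $\norm{U_n}_{L^2(0,T;X)}$ rather than by $\sqrt{E_s^n(0)}$, so your two cases $\lambda>0$ and $\lambda=0$ are handled at once through a limit parameter $\sigma\ge 0$, and in the degenerate case the limiting coefficients are constants (since the unnormalized solutions tend to zero strongly), so the variable-potential Carleman refinement you worry about is not actually needed. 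One small repair to your unique-continuation step: the boundary condition at $x=L$ gives no information on $\omega$, so you must first invoke unique continuation (applied to the solution with the time-independent constants subtracted, or phrased as in the paper: the solution is constant on all of $(0,L)\times(0,T)$) and only afterwards use $u(L,t)=v(L,t)=0$ to conclude that the constant vanishes, rather than trying to show the constant is zero on $\omega$ before the Carleman argument.
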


To prove Theorem \ref{main-dec} we use the so called "Compactness-Uniqueness Argument" which reduces our problem to use a unique continuation property proved by Nina \textit{et al.} \cite{nina} (for more details see Section \ref{sec3}).

\vglue 0.2 cm

The following remarks are now in order:

\begin{remark}
A similar feedback law was used in \cite{Russell1} and, more recently, in \cite{Laurent} for Korteweg-de Vries equation (KdV), to prove a globally uniformly exponential result in a periodic domain. In \cite{Laurent} the damping with a null mean was introduced to conserve the integral of the solution, which for KdV represents the mass (or volume) of the fluid. Such form is thus motivated by a physical interpretation.
\end{remark}

\begin{remark}\label{rmk}
Note that Theorem \ref{main-dec} improves the result proved in \cite{nina} (see Theorem $\mathcal{B}$) in the sense that the decay is obtained with a weaker amount of damping, which not involves a physical interpretation. Moreover, Theorem \ref{main-dec} still holds true for other types of feedback laws, for instance, if the feedback is defined by $f=1_{\omega}(-\frac{d^2}{d x^2} u)^{-1}$ and $g=1_{\omega}(-\frac{d^2}{d x^2} v)^{-1}$. This previous damping mechanism was used by Massarolo \textit{et al.} \cite{massarolo-menzala-pazoto1} for the KdV equation.
\end{remark}


\medskip

The paper is outlined as follows:

\medskip

---- Section \ref{sec2} we review some results of the existence of solutions of the system \eqref{1-1a}-\eqref{gg2a} proved in \cite{nina} that will be used thereafter. In addition, we prove that the nonlinear problem with the extra terms $Gu$ and $Gv$ is also well-posedness. 

\medskip

---- Section \ref{sec3} is devoted to prove our main result, Theorem \ref{main-dec}.

\medskip

----  Section \ref{sec4} is related with some extension results. More precisely, when $a(x)=x^4$ the stabilization of the solution of the system \eqref{1-1a}-\eqref{gg2a} is obtained. Moreover, for $a(u)=u^p$, $p\in(2,4)$, the existence of weak solutions is also verified in this section. 

\medskip 

----  Finally, Section \ref{sec5} contains further comments and a open problem related with the solutions of the system \eqref{1-1a}-\eqref{gg2a}.

\section{Existence of solutions for the Gear-Grimshaw system\label{sec2}}

Most of the results in this section were proved by Nina \textit{et al.} \cite{nina} and Rosier \textit{et al.} \cite{Zhang}. To make the work more complete we present all of this results and, additionally, we prove the well-posedness result for the nonlinear system with extra terms $Gu$ and $Gv$.

\subsection{The linear system}
In this subsection we present results concerning of the existence of solutions of the linear system corresponding to \eqref{1-1a}-\eqref{gg2a}, namely
\begin{equation}
\label{linear}
\begin{cases}
u_t + u_{xxx} + a_3v_{xxx} =0, & \text{in} \,\, (0,L)\times (0,\infty),\\
c v_t +rv_x +a_3b_2u_{xxx} +v_{xxx}=0,  & \text{in} \,\, (0,L)\times (0,T), \\
u(0,t)=0,\,\,u(L,t)=0,\,\,u_{x}(L,t)=0,  & \text{in} \,\, (0,\infty), \\
v(0,t)=0,\,\,v(L,t)=0,\,\,v_{x}(L,t)=0,  & \text{in} \,\, ((0,\infty), \\
u(x,0)= u^0(x), \quad v(x,0)=  v^0(x), & \text{in} \,\, (0,\infty).
\end{cases}
\end{equation}
First, we need introduce the Hilbert space $X=[L^{2}(0,L)]^2$ endowed with the inner product
\begin{eqnarray}
    \left( (u,v),(\varphi,\psi)\right)_X=\frac{b_{2}}{b_{1}}\int^{L}_{0}u\varphi dx+\int^{L}_{0}v\psi dx \nonumber
\end{eqnarray}
and consider the operator
$$\mathcal{A}:D(\mathcal{A})\subset X \rightarrow X$$
where
$$D(\mathcal{A})=\left\{(u,v)\in [H^3(0,L)]^2\,:\, u(0)=v(0)=u(L)=v(L)=u_x(L)=v_x(L)=0\right\}$$
and
\begin{equation}\label{operator}
    \mathcal{A}(u,v)=\left(
\begin{array}{c}
    -u_{xxx}-a_{3}v_{xxx}\\
    -\displaystyle\frac{r}{b_{1}}v_{x}-\frac{b_{2}a_{3}}{b_{1}}u_{xxx}-\frac{1}{b_{1}}v_{xxx}
\end{array}
\right).
\end{equation}

With this notation, system \eqref{linear} can be now written as an abstract Cauchy problem in $X$. Letting $U=(u,v)$ we have
 \begin{equation*}
 \begin{cases}
\displaystyle\frac{dU}{dt}=\mathcal{A}U\\
    U(0)=U^0=(u^0,v^0)\nonumber.
\end{cases}
\end{equation*}
On the other hand, it is easy to verify that the adjoint operator $\mathcal{A^\ast}$, associated to $\mathcal{A}$, is defined by
\begin{equation}\label{A-adj}
 \mathcal{A^\ast}(\varphi,\psi) = \left(
\begin{array}{c}
\displaystyle \varphi_{xxx} + a_3\psi_{xxx} \\ \displaystyle
\frac{r}{b_1} \psi_{x} +\frac{1}{b_1} \psi_{xxx} +
\frac{b_2a_3}{b_1}\varphi_{xxx}
\end{array}
\right)
\end{equation}
where
$$\mathcal{A^\ast}:D(\mathcal{A^\ast})\subset X\rightarrow X$$
and
\begin{align*}D(\mathcal{A^\ast})=\left\{(\varphi,\psi)\in [H^3(0,L)]^2\,:\,\varphi(0)=\psi(0)=\varphi(L)=\psi(L)=\varphi_x(0)=\psi_x(0)=0\right\}.\end{align*}

The following results were borrowed from \cite{nina}:
\begin{proposition}\label{A-dis}
The operator $\mathcal{A}$ and its adjoint $\mathcal{A^\ast}$ are dissipative in $X$.
\end{proposition}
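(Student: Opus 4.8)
The plan is to verify the defining inequality of dissipativity directly, namely that $(\mathcal{A}(u,v),(u,v))_X\le 0$ for every $(u,v)\in D(\mathcal{A})$, and likewise $(\mathcal{A}^\ast(\varphi,\psi),(\varphi,\psi))_X\le 0$ for every $(\varphi,\psi)\in D(\mathcal{A}^\ast)$. For $\mathcal{A}$, I would substitute its definition together with the weighted inner product and expand into five integrals: the two ``diagonal'' terms $\int_0^L u_{xxx}u\,dx$ and $\int_0^L v_{xxx}v\,dx$, the first-order term $\int_0^L v_x v\,dx$, and the two ``cross'' dispersive terms $\int_0^L u_{xxx}v\,dx$ and $\int_0^L v_{xxx}u\,dx$. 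Throughout, the weight $1/b_1$ is positive, since $b_1=c>0$ by hypothesis.

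The entire computation reduces to integration by parts, the only delicacy being careful bookkeeping of the boundary traces. For a diagonal term one has $\int_0^L u_{xxx}u\,dx=[u_{xx}u]_0^L-\tfrac12[u_x^2]_0^L$, and the homogeneous conditions $u(0)=u(L)=u_x(L)=0$ collapse this to $\tfrac12 u_x(0)^2$; the same holds for $v$. The first-order term $\int_0^L v_x v\,dx=\tfrac12[v^2]_0^L$ vanishes because $v(0)=v(L)=0$, so the parameter $r$ drops out entirely. For the two cross terms I would use the exact differential identity $\frac{d}{dx}\left(u_{xx}v-u_x v_x+u v_{xx}\right)=u_{xxx}v+u v_{xxx}$, so that $\int_0^L(u_{xxx}v+u v_{xxx})\,dx=[u_{xx}v-u_x v_x+u v_{xx}]_0^L$, which under the boundary conditions reduces to the single term $u_x(0)v_x(0)$.

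Collecting everything, I expect to arrive at the identity
\[
(\mathcal{A}(u,v),(u,v))_X=-\frac{1}{2b_1}\left(b_2\,u_x(0)^2+2b_2 a_3\,u_x(0)v_x(0)+v_x(0)^2\right).
\]
The final step is then a finite-dimensional linear-algebra fact: the quadratic form $b_2 X^2+2b_2 a_3 XY+Y^2$ is positive semidefinite. This is precisely where the structural hypotheses enter, and I would make it transparent through the sum-of-squares decomposition $b_2\left(X+a_3 Y\right)^2+(1-a_3^2 b_2)Y^2$, which is nonnegative exactly because $b_2>0$ and $1-a_3^2 b_2>0$. Substituting $X=u_x(0)$, $Y=v_x(0)$ yields $(\mathcal{A}(u,v),(u,v))_X\le 0$, the same expression that appears in the energy dissipation computation \eqref{dissipation}.

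The argument for $\mathcal{A}^\ast$ is completely parallel, and I would simply repeat it. The only change is that the adjoint domain imposes the derivative conditions at the left endpoint ($\varphi_x(0)=\psi_x(0)=0$) rather than the right; after the same integrations by parts the surviving boundary traces are now evaluated at $x=L$, giving $(\mathcal{A}^\ast(\varphi,\psi),(\varphi,\psi))_X=-\tfrac{1}{2b_1}\left(b_2\varphi_x(L)^2+2b_2 a_3\varphi_x(L)\psi_x(L)+\psi_x(L)^2\right)$, and the identical sum-of-squares decomposition with $1-a_3^2 b_2>0$ again yields nonpositivity. I do not anticipate a genuine obstacle here: the computation is routine, and the only points requiring care are (i) not dropping or mis-signing any boundary contribution when integrating the cross dispersive terms, and (ii) invoking $1-a_3^2 b_2>0$ to close the positivity of the resulting quadratic form.
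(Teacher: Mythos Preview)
Your proposal is correct. The paper itself does not supply a proof of this proposition---it simply states that the result is borrowed from \cite{nina}---so your direct verification via integration by parts and the sum-of-squares decomposition $b_2(X+a_3Y)^2+(1-a_3^2 b_2)Y^2$ is more complete than what appears here, and it matches exactly the boundary quadratic form the paper later uses in the energy identity \eqref{dissipation}.
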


\begin{theorem}\label{exist-linear}Let $(u^0,v^0)\in X$. There exists a unique weak solution $(u,v)=S(\,\cdot\,)(u^0,v^0)$ of \eqref{linear} such that
\begin{equation}
(u,v)\in C([0,T];X).\nonumber
\end{equation}
Moreover, if $(u^0,v^0)\in D(\mathcal{A}),$ then \eqref{linear} has a unique (classical) solution $(u,v)$ such that
\begin{equation}
(u,v)\in C([0,T];D(\mathcal{A}))\cap C^1(0,T;X). \nonumber
\end{equation}
\end{theorem}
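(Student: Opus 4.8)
The plan is to recognize the linear initial–boundary–value problem \eqref{linear} as the abstract Cauchy problem $U_t=\mathcal{A}U$, $U(0)=U^0$ on $X$, and to prove that $\mathcal{A}$ generates a $C_0$–semigroup of contractions $\{S(t)\}_{t\ge 0}$ by means of the Lumer--Phillips theorem, in the form valid for a closed, densely defined operator whose operator and adjoint are both dissipative. Once this is established, the two well-posedness assertions are immediate consequences of the standard theory of linear semigroups: for $U^0\in X$ the mild solution $S(\cdot)U^0$ is the unique weak solution and lies in $C([0,T];X)$, while for $U^0\in D(\mathcal{A})$ the orbit $t\mapsto S(t)U^0$ is continuously differentiable into $X$ and remains in $D(\mathcal{A})$, giving the unique classical solution in $C([0,T];D(\mathcal{A}))\cap C^1([0,T];X)$.

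First I would verify the three structural hypotheses. Density of $D(\mathcal{A})$ in $X$ is immediate, since $D(\mathcal{A})$ contains $[C_c^\infty(0,L)]^2$, which is dense in $[L^2(0,L)]^2=X$. Dissipativity of $\mathcal{A}$ and of $\mathcal{A}^{\ast}$ is exactly the content of Proposition \ref{A-dis} (we only use this dissipativity, not the explicit form \eqref{A-adj}). The remaining structural point is closedness of $\mathcal{A}$: on $D(\mathcal{A})$ the graph norm $\|U\|_X+\|\mathcal{A}U\|_X$ is equivalent to the $[H^3(0,L)]^2$ norm. Here the assumption $1-a_3^2b_2>0$ enters, as it makes the $2\times 2$ matrix of leading coefficients of $\mathcal{A}$ invertible, so that $u_{xxx}$ and $v_{xxx}$ can be recovered from $\mathcal{A}U$ and lower-order terms and the $L^2$ norms of all derivatives up to order three are controlled by $\|U\|_X+\|\mathcal{A}U\|_X$. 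Since the six homogeneous boundary conditions defining $D(\mathcal{A})$ are preserved under $H^3$–convergence (the trace maps $U\mapsto u(0),v(0),u(L),v(L),u_x(L),v_x(L)$ are continuous on $[H^3(0,L)]^2$), $D(\mathcal{A})$ is complete in the graph norm, i.e. $\mathcal{A}$ is closed.

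With these in hand I would verify the surjectivity condition $\mathrm{Range}(I-\mathcal{A})=X$, which is the heart of Lumer--Phillips. Dissipativity of $\mathcal{A}$ yields the a priori bound $\|(I-\mathcal{A})U\|_X\ge\|U\|_X$, so $I-\mathcal{A}$ is injective and, $\mathcal{A}$ being closed, has closed range. To show this range is all of $X$, suppose $w\in X$ is orthogonal to it; then $((I-\mathcal{A})U,w)_X=0$ for every $U\in D(\mathcal{A})$, which says precisely that $w\in D(\mathcal{A}^{\ast})$ with $(I-\mathcal{A}^{\ast})w=0$. Dissipativity of $\mathcal{A}^{\ast}$ forces $w=0$, so $\mathrm{Range}(I-\mathcal{A})$ is dense, hence equal to $X$. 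Thus $\mathcal{A}$ is m–dissipative and generates a $C_0$–semigroup of contractions $S(t)=e^{t\mathcal{A}}$.

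Finally I would read off the conclusions. The contraction bound $\|S(t)\|\le 1$ together with strong continuity gives $S(\cdot)U^0\in C([0,T];X)$ for every $U^0\in X$, and uniqueness of the weak (mild) solution is built into the semigroup formulation. For $U^0\in D(\mathcal{A})$ the classical regularity $S(\cdot)U^0\in C([0,T];D(\mathcal{A}))\cap C^1([0,T];X)$ together with $\tfrac{d}{dt}S(t)U^0=\mathcal{A}S(t)U^0=S(t)\mathcal{A}U^0$ is the standard differentiability statement for generators, and uniqueness at this level again follows from the contraction estimate. The main obstacle in this scheme is establishing m–dissipativity, i.e. the closedness and surjectivity step; but since Proposition \ref{A-dis} already supplies dissipativity of both $\mathcal{A}$ and $\mathcal{A}^{\ast}$, this reduces to the short closed-range plus orthogonality argument above, so the proof is essentially a clean application of Lumer--Phillips.
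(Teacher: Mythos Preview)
Your proof is correct and follows precisely the approach intended by the paper: the paper does not prove this theorem itself but cites \cite{nina}, and the preceding Proposition~\ref{A-dis} (dissipativity of $\mathcal{A}$ and $\mathcal{A}^{\ast}$) is stated exactly as the input needed for the Lumer--Phillips argument you carry out. Your verification of closedness via the invertibility of the leading-order coefficient matrix (using $1-a_3^2 b_2>0$) and your closed-range plus orthogonality argument for surjectivity of $I-\mathcal{A}$ are the standard steps, so this is essentially the same proof as in the cited reference.
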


\begin{theorem} \label{regul-weak-linear} Let $(u^0,v^0)\in X$ and $(u,v)=S(\,\cdot\,)(u^0,v^0)$ the weak solution of \eqref{linear}. Then, $(u,v)\in L^2(0,T;[H^1(0,L)]^2)\cap H^1(0,T;[H^{-2}(0,L)]^2)$ and there exists a positive constant $c_0$ such that
\begin{equation}
||(u,v)||_{L^2(0,T;[H^1(0,L)]^2)}
\leq c_0 ||(u^0,v^0)||_{X}.\nonumber
\end{equation}
\end{theorem}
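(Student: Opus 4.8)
The plan is to prove the claimed bound first for classical solutions, that is for data $(u^0,v^0)\in D(\mathcal{A})$, where all the manipulations below are legitimate, and then to pass to the limit using the density of $D(\mathcal{A})$ in $X$ together with the continuous dependence furnished by Theorem \ref{exist-linear}. The only nontrivial ingredient is a \emph{Kato smoothing} estimate obtained through the multiplier $q(x)=x$. Concretely, I would multiply the first equation of \eqref{linear} by $b_2\,x\,u$ and the second by $x\,v$, add them, and integrate over $(0,L)$. The weight $b_2$ on the first equation is essential: it makes the two coupling contributions $a_3 b_2\int_0^L x\,u\,v_{xxx}\,dx$ and $a_3 b_2\int_0^L x\,v\,u_{xxx}\,dx$ share the same coefficient, so that they can be recombined symmetrically.

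Carrying out the integrations by parts, each third-order self-term produces, after using $q_x\equiv 1$ and $q_{xx}=q_{xxx}=0$, the identity $\int_0^L x\,w\,w_{xxx}\,dx=\tfrac32\int_0^L w_x^2\,dx$ for $w\in\{u,v\}$, with \emph{all} boundary contributions vanishing thanks to $w(0)=w(L)=w_x(L)=0$. The mixed third-order terms combine, after a further integration by parts and the same boundary conditions, into $3a_3 b_2\int_0^L u_x v_x\,dx$, while the transport term $r\int_0^L x\,v\,v_x\,dx$ reduces to the lower-order quantity $-\tfrac r2\int_0^L v^2\,dx$. Collecting everything yields the differential identity
\begin{equation*}
\frac12\frac{d}{dt}\int_0^L x\,(b_2u^2+b_1v^2)\,dx+\frac32\int_0^L\bigl(b_2u_x^2+v_x^2+2a_3b_2u_xv_x\bigr)\,dx=\frac r2\int_0^L v^2\,dx .
\end{equation*}
The quadratic form in $(u_x,v_x)$ has matrix $\bigl(\begin{smallmatrix} b_2 & a_3b_2\\ a_3b_2 & 1\end{smallmatrix}\bigr)$, whose determinant equals $b_2(1-a_3^2b_2)$; by the standing hypotheses $b_2>0$ and $1-a_3^2b_2>0$ it is positive definite, so there is $\alpha>0$ with $b_2u_x^2+v_x^2+2a_3b_2u_xv_x\ge\alpha(u_x^2+v_x^2)$. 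This positivity is exactly where the structure of the system enters, and it is the step I expect to be the main obstacle: one must track every boundary term and verify that the coupling does not destroy the coercivity of the smoothing term.

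Integrating the identity in time over $(0,T)$ and dropping the nonpositive terminal contribution $-\tfrac12\int_0^L x\,(b_2u(T)^2+b_1v(T)^2)\,dx$, the left-hand side is bounded below by $\tfrac{3\alpha}{2}\int_0^T\!\!\int_0^L(u_x^2+v_x^2)\,dx\,dt$, while the right-hand side is controlled by the initial weighted energy $\tfrac L2\max(b_1,b_2)\norm{(u^0,v^0)}_X^2$ plus $\tfrac{|r|}{2}\int_0^T\!\!\int_0^L v^2\,dx\,dt$. For the latter I would invoke the dissipativity of $\mathcal{A}$ (Proposition \ref{A-dis}), which gives $\norm{(u(t),v(t))}_X\le\norm{(u^0,v^0)}_X$ and hence $\int_0^T\!\!\int_0^L v^2\,dx\,dt\le C\,T\,\norm{(u^0,v^0)}_X^2$. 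Combined with the same $L^2$ bound for $\int_0^T\!\!\int_0^L(u^2+v^2)\,dx\,dt$, this produces the asserted estimate $\norm{(u,v)}_{L^2(0,T;[H^1(0,L)]^2)}\le c_0\norm{(u^0,v^0)}_X$.

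Finally, the temporal regularity $(u,v)\in H^1(0,T;[H^{-2}(0,L)]^2)$ follows directly from the equations once the spatial bound is available: since $u_t=-u_{xxx}-a_3v_{xxx}$ and $b_1v_t=-rv_x-b_2a_3u_{xxx}-v_{xxx}$, and since $(u,v)\in L^2(0,T;[H^1]^2)$ gives $u_{xxx},v_{xxx}\in L^2(0,T;H^{-2})$ and $v_x\in L^2(0,T;H^{-2})$, the time derivatives lie in $L^2(0,T;H^{-2})$. To complete the argument I would take $(u^0_n,v^0_n)\in D(\mathcal{A})$ converging to $(u^0,v^0)$ in $X$; the corresponding classical solutions satisfy the above bound uniformly, so they converge weakly in $L^2(0,T;[H^1]^2)$ to a limit which, by Theorem \ref{exist-linear}, is the weak solution $S(\,\cdot\,)(u^0,v^0)$, and the estimate survives the passage to the limit by weak lower semicontinuity of the norm.
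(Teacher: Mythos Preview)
Your argument is correct. The paper does not actually prove this theorem; it is quoted verbatim from \cite{nina} (``The following results were borrowed from \cite{nina}''), so there is no in-paper proof to compare against. Your approach---the Kato smoothing multiplier $q(x)=x$, with the weight $b_2$ on the first equation so that the cross terms $a_3b_2\int x\,u\,v_{xxx}$ and $a_3b_2\int x\,v\,u_{xxx}$ combine symmetrically into $3a_3b_2\int u_xv_x$, followed by the positive-definiteness of $\bigl(\begin{smallmatrix} b_2 & a_3b_2\\ a_3b_2 & 1\end{smallmatrix}\bigr)$ under the standing hypothesis $1-a_3^2b_2>0$---is exactly the standard proof for this class of coupled KdV systems, and it is the method used in \cite{nina}. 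The same multiplier is in fact invoked later in the present paper (see \eqref{dec2} in the proof of Proposition~\ref{local-dec} and Estimate~I in the proof of Theorem~\ref{main-dec1}), confirming that your route is the intended one. The density-and-weak-limit passage from $D(\mathcal{A})$ to $X$, and reading the $H^1(0,T;H^{-2})$ regularity directly off the equations, are likewise routine and correctly handled.
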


\begin{corollary}\label{exist-interp}
For any $s\in [0,3]$ and any $(u^0,v^0)\in [H^s(0,L)]^2$, the solution $(u,v)$ of \eqref{linear} belongs to $C([0,T]; [H^s(0,L)]^2)$.
\end{corollary}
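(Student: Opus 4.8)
The plan is to obtain the intermediate regularity by interpolating between the two endpoint results of Theorem \ref{exist-linear}, using that the solution map is a $C_0$-semigroup of contractions. Since $\mathcal{A}$ is densely defined and closed, and both $\mathcal{A}$ and $\mathcal{A}^\ast$ are dissipative (Proposition \ref{A-dis}), the Lumer--Phillips theorem guarantees that $\mathcal{A}$ generates such a semigroup $S(t)=e^{t\mathcal{A}}$, which is precisely the solution operator of \eqref{linear}. At the endpoint $s=0$, Theorem \ref{exist-linear} states that $S(t)$ maps $X=[L^2(0,L)]^2$ continuously into $C([0,T];X)$, with $\norm{S(t)U^0}_X\le \norm{U^0}_X$. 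At the endpoint $s=3$, the classical-solution part of Theorem \ref{exist-linear} gives, for $U^0\in D(\mathcal{A})$, that $S(\cdot)U^0\in C([0,T];D(\mathcal{A}))$; moreover, since $S(t)$ commutes with $\mathcal{A}$ on $D(\mathcal{A})$, it is contractive for the graph norm as well, and on this subspace the graph norm is equivalent to the $[H^3(0,L)]^2$ norm. This provides the $s=3$ endpoint.

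Next I would invoke the interpolation theorem for linear operators on the Banach couple $(X,D(\mathcal{A}))$. For each fixed $t\in[0,T]$, the operator $S(t)$ is bounded (indeed contractive) on both $X$ and $D(\mathcal{A})$, with operator norms bounded uniformly in $t$; consequently $S(t)$ is bounded on every intermediate space $[X,D(\mathcal{A})]_\theta$, $\theta\in(0,1)$, with a uniform bound. Setting $s=3\theta$, the crucial ingredient is the identification
\begin{equation*}
[X,D(\mathcal{A})]_{s/3}=[H^s(0,L)]^2,
\end{equation*}
valid for $s$ in the stated range, so that $S(t)$ maps $[H^s(0,L)]^2$ into itself with a $t$-uniform bound.

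It then remains to upgrade boundedness to continuity in time. For $U^0\in D(\mathcal{A})$ the map $t\mapsto S(t)U^0$ is continuous into $D(\mathcal{A})$, hence into $[X,D(\mathcal{A})]_{s/3}$; since $D(\mathcal{A})$ is dense in this interpolation space and the operators $S(t)$ are uniformly bounded on it, a standard density (three-$\varepsilon$) argument extends the continuity to all data in $[H^s(0,L)]^2$. This yields $(u,v)=S(\cdot)(u^0,v^0)\in C([0,T];[H^s(0,L)]^2)$, as claimed.

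The step I expect to be most delicate is the identification of $[X,D(\mathcal{A})]_{s/3}$ with $[H^s(0,L)]^2$. Because $D(\mathcal{A})$ incorporates the boundary conditions $u(0)=u(L)=u_x(L)=0$ (and analogously for $v$), the interpolation space coincides with the \emph{full} Sobolev space $[H^s(0,L)]^2$ only while $s$ stays below the thresholds at which the relevant traces become well defined; for larger $s$ it is the closed subspace of $[H^s(0,L)]^2$ cut out by the compatible boundary conditions, so the statement is to be read with these natural compatibility conditions on the data. Verifying that the interpolation space is exactly this subspace is the technical heart of the argument, and it can be carried out following the characterization of interpolation spaces for the associated third-order operator in Rosier \textit{et al.} \cite{Zhang}.
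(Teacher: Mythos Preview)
Your interpolation argument is the standard route and is essentially correct; note, however, that the paper does not actually prove this corollary here. It is listed among the results ``borrowed from \cite{nina}'' and is stated without proof, so there is no paper-proof to compare line by line. The method used in \cite{nina} (and in \cite{Zhang} for the scalar KdV case) is precisely the semigroup interpolation you outline: contractivity of $S(t)$ on $X$ and on $D(\mathcal{A})$ (with the graph norm), interpolation of the operator family, and identification of $[X,D(\mathcal{A})]_{s/3}$.

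Your caveat about the identification $[X,D(\mathcal{A})]_{s/3}=[H^s(0,L)]^2$ is well placed and is in fact the only substantive point. As you say, once $s$ crosses the trace thresholds ($s>1/2$, then $s>3/2$), the interpolation space is not the full $[H^s(0,L)]^2$ but the closed subspace carrying the appropriate $s$-compatibility conditions; the paper itself introduces exactly this scale $X_s$ in the next subsection. So the corollary, read literally for all $s\in[0,3]$ with data in the unconstrained space $[H^s(0,L)]^2$, is slightly imprecise, and your reading---that the data must lie in the compatible subspace for $s$ above the thresholds---is the correct interpretation. With that understanding, your proof is complete and matches what the cited references do.
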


\subsection{The nonlinear system} For $0 \leq s \leq 3$, let $X_s$ denote the collection of all the functions $w\in H^s(0,L)$ satisfying the s-compatibility conditions
\begin{equation*}
\begin{cases}
w(0)=w(L)=0\mbox{ when } 1/2 < s \leq 3/2, \\
w(0)=w(L)=w'(L)=0\mbox{ when } 3/2 < s \leq 3.
\end{cases}
\end{equation*}
$X_s$ is endowed with the Hilbertian norm $||w||_{H^s}$. For any $T>0$ we introduce the space
$$Y_{s,T}=C([0,T];X_{s})\cap L^{2}([0,T];H^{s+1}(0,L))$$
endowed with the norm
$$||w||_{Y_{s,T}}=||w||_{C([0,T];H^{s}(0,L))}+||w||_{L^{2}([0,T];H^{s+1}(0,L))}.$$

The next technical Lemma will be related with the nonlinear problem.

\begin{lemma}\label{tech2}
For any $T > 0$, $1 \leq p \leq 2$ and $u, v, w\in Y_{0,T}$,
\begin{eqnarray}
\int^{T}_{0}\left\|Gu\right\|_{L^{2}(\omega)}dt&\leq& CT \left\|u\right\|_{Y_{0,T}},\label{tech2.1}\\
\int^{T}_{0}\left\|uw_{x}\right\|_{L^{2}(0,L)}dt&\leq& CT^{1/4}\left\|u\right\|_{Y_{0,T}}\left\|w\right\|_{Y_{0,T}},\label{tech2.2}\\
\int^{T}_{0}\left\|u\left|w\right|^{p-1}w_{x}\right\|_{L^{2}(0,L)}dt&\leq& CT^{(2-p)/4}\left\|u\right\|_{Y_{0,T}}\left\|w\right\|^{p}_{Y_{0,T}},\label{tech2.3}\\
\int^{T}_{0}\left\|u\left|v\right|^{p-1}w_{x}\right\|_{L^{2}(0,L)}dt&\leq& CT^{(2-p)/4}\left\|u\right\|_{Y_{0,T}}\left\|w\right\|_{Y_{0,T}}\left\|v\right\|^{p-1}_{Y_{0,T}},\label{tech2.4}
\end{eqnarray}
where $C$ is a positive constant that depends only on $L$.
\end{lemma}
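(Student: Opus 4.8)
\emph{Strategy.} The plan is to derive all four inequalities from two ingredients: the one--dimensional Agmon (Gagliardo--Nirenberg) inequality
$$\|w\|_{L^\infty(0,L)}\le C(L)\,\|w\|_{L^2(0,L)}^{1/2}\,\|w\|_{H^1(0,L)}^{1/2},\qquad w\in H^1(0,L),$$
whose constant depends only on $L$ since $w(x)^2\le \tfrac1L\|w\|_{L^2}^2+2\|w\|_{L^2}\|w_x\|_{L^2}$ for every $x$; and Hölder's inequality in the time variable, which upgrades a bound valid at fixed $t$ to an estimate on $(0,T)$ and manufactures the powers of $T$. First I would dispatch \eqref{tech2.1}: by Cauchy--Schwarz,
$$\|Gu(t)\|_{L^2(\omega)}\le \|u(t)\|_{L^2(\omega)}+\frac{1}{|\omega|^{1/2}}\Bigl|\int_\omega u(t,x)\,dx\Bigr|\le C\,\|u(t)\|_{L^2(0,L)},$$
so integrating over $[0,T]$ and bounding by the supremum gives $\int_0^T\|Gu\|_{L^2(\omega)}\,dt\le CT\|u\|_{C([0,T];L^2)}\le CT\|u\|_{Y_{0,T}}$.

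For \eqref{tech2.2} I would write $\|uw_x\|_{L^2(0,L)}\le\|u\|_{L^\infty(0,L)}\|w_x\|_{L^2(0,L)}$ and apply Agmon to $u$, so that for a.e.\ $t$ one has $\|uw_x\|_{L^2}\le C\,\|u(t)\|_{L^2}^{1/2}\|u(t)\|_{H^1}^{1/2}\|w(t)\|_{H^1}$; then pull the factor $\|u(t)\|_{L^2}^{1/2}\le\|u\|_{C([0,T];L^2)}^{1/2}$ out of the time integral and apply Hölder in $t$ to the three factors $\|u\|_{H^1}^{1/2}$, $\|w\|_{H^1}$ and the constant $1$ with exponents $(4,2,4)$. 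This produces
$$\int_0^T\|uw_x\|_{L^2}\,dt\le C\,\|u\|_{C_tL^2}^{1/2}\|u\|_{L^2_tH^1}^{1/2}\|w\|_{L^2_tH^1}\,T^{1/4},$$
and \eqref{tech2.2} follows from $\|u\|_{C_tL^2}^{1/2}\|u\|_{L^2_tH^1}^{1/2}\le\|u\|_{Y_{0,T}}$ together with $\|w\|_{L^2_tH^1}\le\|w\|_{Y_{0,T}}$.

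Estimates \eqref{tech2.3}--\eqref{tech2.4} follow the same scheme, now distributing the $L^\infty(0,L)$ norm over two (respectively three) factors. For \eqref{tech2.3}, from $\|u|w|^{p-1}w_x\|_{L^2}\le\|u\|_{L^\infty}\|w\|_{L^\infty}^{p-1}\|w_x\|_{L^2}$ and Agmon applied to $u$ and to $w$ one gets, pointwise in $t$,
$$\|u|w|^{p-1}w_x\|_{L^2}\le C\,\|u\|_{L^2}^{1/2}\|u\|_{H^1}^{1/2}\,\|w\|_{L^2}^{(p-1)/2}\|w\|_{H^1}^{(p+1)/2};$$
after extracting $\|u\|_{C_tL^2}^{1/2}$ and $\|w\|_{C_tL^2}^{(p-1)/2}$, Hölder in $t$ applied to $\|u\|_{H^1}^{1/2}$, $\|w\|_{H^1}^{(p+1)/2}$ and $1$ with exponents $\bigl(4,\tfrac4{p+1},\tfrac4{2-p}\bigr)$ yields the factor $T^{(2-p)/4}$ and the correct powers of $\|u\|_{L^2_tH^1}$, $\|w\|_{L^2_tH^1}$; collecting terms and using $\|w\|_{C_tL^2}^{(p-1)/2}\|w\|_{L^2_tH^1}^{(p+1)/2}\le\|w\|_{Y_{0,T}}^{p}$ gives \eqref{tech2.3}. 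For \eqref{tech2.4} one keeps $v$ and $w$ separate, $\|u|v|^{p-1}w_x\|_{L^2}\le\|u\|_{L^\infty}\|v\|_{L^\infty}^{p-1}\|w_x\|_{L^2}$, and after the analogous Agmon step applies Hölder in $t$ to $\|u\|_{H^1}^{1/2}$, $\|v\|_{H^1}^{(p-1)/2}$, $\|w\|_{H^1}$ and $1$ with exponents $\bigl(4,\tfrac4{p-1},2,\tfrac4{2-p}\bigr)$; in particular \eqref{tech2.2} is just the case $p=1$ of either of these.

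The computations are routine; the one point to watch — and the only real ``obstacle'' — is the bookkeeping of the Hölder exponents in time: each time I would verify that the reciprocals sum to $1$ and that no factor is required to lie in a space stronger than $L^2_tH^1_x$, which is all that $Y_{0,T}$ furnishes. This is exactly where the hypothesis $1\le p\le 2$ is used: $p\le 2$ keeps the exponent $\tfrac4{2-p}$ of the constant factor non-negative (the endpoint $p=2$ being read as $L^\infty_t$, so the $T$-power degenerates to $T^0=1$), while $p\ge1$ is needed for $|w|^{p-1}$, $|v|^{p-1}$ to be meaningful and for $\tfrac4{p-1}$ to be an admissible exponent (again with $p=1$ read as $L^\infty_t$, which merely recovers \eqref{tech2.2}).
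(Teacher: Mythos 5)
Your proposal is correct: for \eqref{tech2.1} the paper argues directly as well (using that $Gu$ is the mean-zero projection on $\omega$, so $\|Gu\|_{L^2(\omega)}\le\|u\|_{L^2(0,L)}$, which gives constant $1$ where your triangle-inequality/Cauchy--Schwarz bound gives $2$ --- both fine), and for \eqref{tech2.2}--\eqref{tech2.4} the paper simply omits the proofs and cites Rosier--Zhang, whose arguments are exactly your Agmon-in-$x$ plus H\"older-in-$t$ scheme. Your exponent bookkeeping ($1/4+1/2+1/4=1$, $1/4+(p+1)/4+(2-p)/4=1$, $1/4+(p-1)/4+1/2+(2-p)/4=1$) checks out, so you have in effect supplied the details the paper leaves out, by essentially the same method.
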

\begin{proof}
Estimates \eqref{tech2.2}, \eqref{tech2.3} and \eqref{tech2.4} can be obtained following closely the arguments used in \cite{Zhang}. Therefore, we will omit the proofs. 

Now, we prove the estimate \eqref{tech2.1}. By a direct computation, we have
\begin{align*}
\int^{T}_{0}||Gu||^2_{L^2(\omega)}dt&=\int^T_0\big(\int_{\omega}u^2dx-|\omega|^{-1}\big(\int_{\omega}udx\big)^2\big)^{1/2}dt\\
&\leq\int^T_0\big(\int^L_0u^2dx\big)^{1/2}dt\leq T||u||_{Y_{0,T}}.
\end{align*}
Thus \eqref{tech2.1} hold and the proof is finished.
\end{proof}

Now, we consider the following system
\begin{equation}\label{1-1a1}
\left\{
\begin{array}{l}
\vspace{1mm}u_t+u_{xxx}+a_3v_{xxx}+a(u)u_x + a_1vv_x + a_2(uv)_x + Gu =  0 \\
b_1v_t + rv_x + v_{xxx} + b_2a_3u_{xxx} + a(v)v_x + b_2a_2uu_x + b_2a_1(uv)_x + Gv = 0,\\
u(x,0)= u^0(x), \quad v(x,0)=  v^0(x)
\end{array}
\right.
\end{equation}
where $0<x<L$, $t>0$, satisfying the following boundary conditions
\begin{equation}\label{gg2a1}
\begin{cases}
u(0,t)=0,\,\,u(L,t)=0,\,\,u_{x}(L,t)=0,\\
v(0,t)=0,\,\,v(L,t)=0,\,\,v_{x}(L,t)=0,
\end{cases}
\end{equation}
where $Gu$ and $Gv$ are defined by \eqref{operator_new} and \eqref{operator_new_1}, respectively. 

The next Lemma and the well-posedness result for the system \eqref{1-1a1}-\eqref{gg2a1} were borrowed in \cite{nina}. Since the proof is similar as made in \cite{nina}, we will omit it.

\begin{lemma}\label{tech3}
Let $a=a(x)$ be a $\mathcal{C}^0$ function such that, for $0\leq p < 4$,
$$|a(x)|\leq C(1+|x|^p),\quad\forall\, x\in \mathbb{R},$$
where $C$ is a positive constant. Then, for any $T > 0$
\begin{eqnarray}
&&\left\|(u(\cdot,T),v(\cdot,T))\right\|^{2}_{X}-\left\|(u^0,v^0)\right\|^{2}_{X}\nonumber\\
&&  +\frac{1}{b_{1}}\int^{T}_{0}\left[\Big(\sqrt{b_{2}}u_{x}(0,t)+\sqrt{a^{2}_{3}b_{2}}v_{x}(0,t)\Big)^{2}
    +\left(1-a^{2}_{3}b_{2}\right)v^{2}_{x}(0,t)\right]dt\nonumber\\
&&+\frac{2}{b_{1}}\int^{T}_{0}\left(b_{2}||Gu||^{2}_{L^2(\omega)}+||Gv||^{2}_{L^2(\omega)}\right)dt=0\nonumber
\end{eqnarray}
and
\begin{equation}
\begin{array}{l}
\vspace{1mm}||(u,v)||^2_{L^2(0,T;[H^1_0(0,L)]^2)}\nonumber\\
\quad\quad\leq C\,\{(1 + T)\left\|(u^0,v^0)\right\|^{2}_{X} + T\left\|(u^0,v^0)\right\|^{6}_{X}
+T\left\|(u^0,v^0)\right\|^{\frac{8+2p}{4-p}}_{X}\}\nonumber
\end{array}
\end{equation}
where $C$ is a positive constant.
\end{lemma}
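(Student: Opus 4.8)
The plan is to prove both assertions by the classical multiplier method, deriving the energy identity from the multipliers $(b_2u,v)$ and the a priori $H^1$-bound from the weighted multipliers $(b_2xu,xv)$. All manipulations are first carried out for classical solutions (data in $D(\mathcal{A})$, for which Theorem \ref{exist-linear} and the well-posedness recalled from \cite{nina} supply enough regularity), and then extended to weak solutions by density together with the continuous dependence encoded in Lemma \ref{tech2}.

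For the energy identity I would multiply the first equation of \eqref{1-1a1} by $b_2u$, the second by $v$, add, and integrate over $(0,L)$. The time-derivative terms assemble into $\tfrac12\tfrac{d}{dt}\int_0^L(b_2u^2+b_1v^2)\,dx$. Every nonlinear contribution drops out: writing $\int_0^L u\,a(u)u_x\,dx=\int_0^L\partial_x\Phi(u)\,dx$ with $\Phi'(s)=s\,a(s)$ and $\Phi(0)=0$, the boundary conditions \eqref{gg2a1} make it vanish, while the quadratic coupling terms cancel pairwise after integration by parts (for instance $\int_0^L uvv_x\,dx=-\tfrac12\int_0^L u_xv^2\,dx$ exactly cancels the matching piece of $\int_0^L v(uv)_x\,dx$). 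The dispersive terms leave only boundary traces at $x=0$, producing $\tfrac{b_2}{2}u_x^2(0,t)+\tfrac12v_x^2(0,t)+a_3b_2u_x(0,t)v_x(0,t)$, and $\int_0^L uGu\,dx=\|Gu\|_{L^2(\omega)}^2$ by a direct expansion of the zero-mean projection. This is exactly \eqref{dissipation}; integrating in $t$, multiplying by $2/b_1$, and completing the square in the trace bracket yields the stated equality. Since $1-a_3^2b_2>0$ and $b_2>0$, every term other than $\|(u(\cdot,T),v(\cdot,T))\|_X^2$ is nonnegative, so the energy is nonincreasing and $\sup_t\|(u(t),v(t))\|_X\le\|(u^0,v^0)\|_X$, which I will use repeatedly below.

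For the a priori bound I would use the multipliers $b_2xu$ and $xv$. The crucial point is that, thanks to the three boundary conditions at each endpoint, all boundary terms vanish and the third-order terms produce the positive $H^1$-energy: $\int_0^L xu\,u_{xxx}\,dx=\tfrac32\int_0^L u_x^2\,dx$, $\int_0^L xv\,v_{xxx}\,dx=\tfrac32\int_0^L v_x^2\,dx$, and, using $uv_{xxx}+vu_{xxx}=\partial_x(uv_{xx}+vu_{xx}-u_xv_x)$, $\int_0^L x(uv_{xxx}+vu_{xxx})\,dx=3\int_0^L u_xv_x\,dx$. Hence the principal part equals $\tfrac32\int_0^L(b_2u_x^2+v_x^2+2a_3b_2u_xv_x)\,dx$, whose integrand is the quadratic form with matrix $\left(\begin{smallmatrix}b_2 & a_3b_2\\ a_3b_2 & 1\end{smallmatrix}\right)$; this is positive definite precisely because $b_2>0$ and $\det=b_2(1-a_3^2b_2)>0$, so the principal part dominates $c_1\int_0^L(u_x^2+v_x^2)\,dx$ for some $c_1>0$, which by Poincaré controls the full $[H_0^1(0,L)]^2$-norm. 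The time-derivative terms integrate to $\tfrac12\big[b_2\int_0^L xu^2+b_1\int_0^L xv^2\big]_0^T$, bounded by $CL\sup_t\|(u,v)\|_X^2\le CL\|(u^0,v^0)\|_X^2$, while the terms $rv_x$, $Gu$, $Gv$ are controlled by $CT\|(u^0,v^0)\|_X^2$ via the energy bound and $\|Gu\|_{L^2}\le\|u\|_{L^2}$.

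The heart of the proof is the remaining nonlinear terms. For $\int_0^T\!\int_0^L xu\,a(u)u_x$ the antiderivative trick gives $-\int_0^T\!\int_0^L\Phi(u)$ with $|\Phi(s)|\le C(s^2+|s|^{p+2})$; the Gagliardo--Nirenberg inequality yields $\|u\|_{L^{p+2}}^{p+2}\le C\|u_x\|_{L^2}^{p/2}\|u\|_{L^2}^{(p+4)/2}$, and then Hölder in time followed by Young's inequality (with conjugate exponents $4/p$ and $4/(4-p)$, legitimate since $p<4$) splits this as $\epsilon\int_0^T\|u_x\|_{L^2}^2\,dt+C_\epsilon T\|(u^0,v^0)\|_X^{(8+2p)/(4-p)}$, the exponent appearing as $2(p+4)/(4-p)$. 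The quadratic coupling terms yield cubic integrands treated the same way and dominated by the $\|(u^0,v^0)\|_X^6$ term. Choosing $\epsilon$ fixed and small (independent of the data) so that the total $\epsilon\int_0^T\|u_x\|_{L^2}^2$ stays below the coercivity constant $c_1$, I absorb those pieces into the principal part and collect the rest into the stated right-hand side. The main obstacles I anticipate are exactly this bookkeeping --- keeping the absorbed coefficient independent of $\|(u^0,v^0)\|_X$ and matching the precise powers $6$ and $(8+2p)/(4-p)$ --- together with the rigorous passage from classical to weak solutions, where the hidden trace regularity of $u_x(0,\cdot)$ and $v_x(0,\cdot)$ must be recovered in the limit.
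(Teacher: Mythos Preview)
Your approach is correct and is precisely the standard multiplier argument that the paper defers to \cite{nina}: the identity comes from the multipliers $(b_2u,v)$ exactly as in \eqref{dissipation}, and the $H^1$ bound from the weighted multipliers $(b_2xu,xv)$ together with the positive definiteness of the dispersive quadratic form (via $b_2>0$, $1-a_3^2b_2>0$), Gagliardo--Nirenberg, and Young's inequality to produce the exponents $6$ and $(8+2p)/(4-p)$. Since the paper itself omits the proof and only cites \cite{nina}, your write-up is in fact more detailed than what appears here, and matches the intended argument.
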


\begin{lemma}\label{existence1}
Let $a=a(x)$ be a $\mathcal{C}^1$ function such that
$$|a(x)|\leq C(1+|x|^p)\,\,\mbox{ and }\,\,|a'(x)|\leq C(1+|x|^{p-1}), \quad\forall\,x\in \mathbb{R},$$
where $C$ is a positive constant and $1\leq p < 2$. Then, for any $T > 0$ and $(u^0,v^0)\in X$ system \eqref{1-1a1}-\eqref{gg2a1} has a unique global solution.
\end{lemma}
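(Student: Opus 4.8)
The plan is to prove the lemma in two stages: first establish \emph{local} well-posedness by a contraction mapping argument in $Y_{0,T}$, and then upgrade to a global statement using the a priori bound of Lemma~\ref{tech3}. Write $U=(u,v)$, $U^0=(u^0,v^0)$, let $S(\cdot)$ be the semigroup of Theorem~\ref{exist-linear} (recall that the transport term $rv_x$ is already part of $\mathcal{A}$), and collect all the remaining nonlinear and damping terms of \eqref{1-1a1} into
\[
N(U)=\Big( a(u)u_x + a_1 vv_x + a_2(uv)_x + Gu,\ \tfrac{1}{b_1}\big( a(v)v_x + b_2 a_2 uu_x + b_2 a_1 (uv)_x + Gv\big)\Big).
\]
A solution of \eqref{1-1a1}--\eqref{gg2a1} is then a fixed point of
\[
\Gamma(U)(t)=S(t)U^0-\int_0^t S(t-\tau)N(U)(\tau)\,d\tau .
\]
Using Theorem~\ref{regul-weak-linear} and Corollary~\ref{exist-interp}, together with their inhomogeneous counterparts for $\int_0^t S(t-\tau)(\cdot)\,d\tau$ obtained by transposition (as in \cite{nina,Zhang}), one bounds $\|\Gamma(U)\|_{Y_{0,T}}$ by $C_0\|U^0\|_X$ plus a constant times $\int_0^T\|N(U)(\tau)\|_{[L^2(0,L)]^2}\,d\tau$.

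The last integral is estimated term by term by Lemma~\ref{tech2}: the quadratic terms $vv_x$, $uu_x$, $(uv)_x$ by \eqref{tech2.2}, the nonlocal terms $Gu,Gv$ by \eqref{tech2.1} and its obvious analogue, and the term $a(u)u_x$ (resp.\ $a(v)v_x$) by splitting via $|a(s)|\le C(1+|s|^p)$, $1\le p<2$, into a piece controlled by \eqref{tech2.2} and a piece of the form $|u|^{p-1}(|u|u_x)$ controlled by \eqref{tech2.3}. This yields $\|\Gamma(U)\|_{Y_{0,T}}\le C_0\|U^0\|_X+\phi(T)\,P(\|U\|_{Y_{0,T}})$ with $\phi(T)\to0$ as $T\to0^+$ and $P$ a polynomial with $P(0)=0$, and, applied to differences, $\|\Gamma(U_1)-\Gamma(U_2)\|_{Y_{0,T}}\le\phi(T)\,Q(\|U_1\|_{Y_{0,T}},\|U_2\|_{Y_{0,T}})\|U_1-U_2\|_{Y_{0,T}}$. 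Choosing $R=2C_0\|U^0\|_X$ and then $T=T(\|U^0\|_X)>0$ small, $\Gamma$ maps the closed ball $B_R\subset Y_{0,T}$ into itself and is a strict contraction there, so the Banach fixed point theorem gives a unique $U\in B_R$; uniqueness in all of $Y_{0,T}$ follows from the contraction estimate applied on a short subinterval and a covering argument. The point to retain is that the local existence time $T$ depends on $U^0$ only through $\|U^0\|_X$.

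To globalize, let $[0,T^*)$ be the maximal interval of existence. The identity in Lemma~\ref{tech3}, combined with the standing assumptions $1-a_3^2b_2>0$ and $b_1,b_2>0$, shows that the two boundary integrals and the two damping integrals there are nonnegative, whence
\[
\|(u(t),v(t))\|_X\le\|U^0\|_X\qquad\text{for all }t\in[0,T^*).
\]
Thus the $X$-norm of the solution stays bounded as $t\uparrow T^*$. If $T^*<\infty$, pick $t_0<T^*$ with $T^*-t_0<T(\|U^0\|_X)$ and restart the local construction from $(u(t_0),v(t_0))$: since $\|(u(t_0),v(t_0))\|_X\le\|U^0\|_X$, the guaranteed new existence time is again $\ge T(\|U^0\|_X)$, so the solution extends past $T^*$, contradicting maximality. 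Hence $T^*=+\infty$ and the solution is global.

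The only genuinely delicate point is the closure of the nonlinear estimates in $Y_{0,T}$ — in particular that the restriction $1\le p<2$ is exactly what lets $a(u)u_x$ be absorbed with a positive power of $T$ as in \eqref{tech2.3} — but this is precisely the content of Lemma~\ref{tech2}, borrowed from \cite{Zhang}, so no new obstruction appears; the rest is the classical marriage of a contraction argument with the conserved-energy a priori bound, exactly as in \cite{nina,Zhang}.
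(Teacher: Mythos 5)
Your argument is correct and is essentially the proof the paper has in mind: the paper omits it, deferring to Nina \emph{et al.} \cite{nina} and Rosier--Zhang \cite{Zhang}, where exactly this scheme (contraction in $Y_{0,T}$ via the Duhamel formula and the estimates of Lemma~\ref{tech2}, with the restriction $1\le p<2$ entering through the factor $T^{(2-p)/4}$, followed by globalization from the energy identity of Lemma~\ref{tech3} and $1-a_3^2b_2>0$) is carried out. The only cosmetic slip is attributing the linear piece of $a(u)u_x$ to \eqref{tech2.2}; it is handled by the trivial bound $\int_0^T\norm{u_x}_{L^2}\,dt\le T^{1/2}\norm{u}_{Y_{0,T}}$, which does not affect the argument.
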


As a consequence we get the following result.

\begin{corollary}\label{existence1-1}
Let $a=a(x)$ be a $\mathcal{C}^2$ function such that
$$|a(x)|\leq C(1+|x|^p),\,\,|a'(x)|\leq C(1+|x|^{p-1}),\,\,|a''(x)|\leq C(1+|x|^{p-2}), \quad\forall\,x\in \mathbb{R},$$ where $C$ is a positive constant and $p \geq 2$. Then, for any $(u^0,v^0)\in [H^1_0(0,L)]^2$ there exists a $T^\ast > 0$, depending only on $||(u^0,v^0)||_{[H^1(0,L)]^2}$, such that system \eqref{1-1a1}-\eqref{gg2a1} admits a unique solution $(u,v)\in L^\infty (0,T^\ast; [H^1_0(0,L)]^2)$.
\end{corollary}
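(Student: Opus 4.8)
The plan is to couple the $L^2$-level well-posedness already at our disposal with an \emph{a priori} estimate in $H^1$ that, for $p\ge 2$, is only local in time; the local existence time will be dictated by exactly that estimate. Concretely, I would first build finite-dimensional Galerkin approximations $(u^m,v^m)$ of \eqref{1-1a1}--\eqref{gg2a1} in spaces spanned by smooth functions satisfying the boundary conditions \eqref{gg2a1} (for instance eigenfunctions of a self-adjoint realization of $-\partial_x^2$ together with those conditions, or the basis used for the linear problem \eqref{linear}), so that all the manipulations below are legitimate at the approximate level. The first, global and $m$-independent, bound is the one of Lemma \ref{tech3}: testing the first equation against $b_2u^m$ and the second against $v^m$, the boundary terms assemble precisely as in \eqref{dissipation} into a nonpositive combination of $u^m_x(0,t)$ and $v^m_x(0,t)$ (here $b_1,b_2>0$ and $1-a_3^2b_2>0$ are used), while the damping contributes $-b_2\|Gu^m\|_{L^2(\omega)}^2-\|Gv^m\|_{L^2(\omega)}^2\le 0$, whence $\|(u^m,v^m)(t)\|_X\le\|(u^0,v^0)\|_X$; the usual $x$-multiplier then bounds $(u^m,v^m)$ in $L^2(0,T;[H^1_0(0,L)]^2)$. (Alternatively, the statement can be read as a \emph{consequence} of Lemma \ref{existence1}, by first replacing $a$ with a $\mathcal C^2$ truncation $a_N$ of linear growth, solving globally by that lemma, running the uniform estimates below, and observing that the truncation is inactive once $\|u_N\|_{L^\infty}\le\|u_N\|_{H^1}<N$.)

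The heart of the argument is the $H^1$ estimate. Testing the first equation against $-b_2u^m_{xx}$ and the second against $-v^m_{xx}$, the third-order terms again produce boundary contributions at $x=0$ whose leading part is a negative-definite quadratic form in $u^m_{xx}(0,t),v^m_{xx}(0,t)$ (once more by $1-a_3^2b_2>0$), while $rv^m_x$ and the $(uv)$-type couplings are lower order. For the quasilinear term, one integration by parts gives
\[
\int_0^L a(u^m)\,u^m_x\,(-u^m_{xx})\,dx=\tfrac12\int_0^L a'(u^m)\,(u^m_x)^3\,dx+\text{boundary terms},
\]
so, using $H^1(0,L)\hookrightarrow L^\infty(0,L)$, the bound $|a'(s)|\le C(1+|s|^{p-1})$, and the one-dimensional inequality $\|u^m_x\|_{L^\infty}\le C\|u^m_x\|_{L^2}^{1/2}\|u^m_{xx}\|_{L^2}^{1/2}$, this term is controlled by $C(1+\|u^m\|_{H^1}^{p-1})\|u^m_x\|_{L^2}^{5/2}\|u^m_{xx}\|_{L^2}^{1/2}$, hence by $\varepsilon\|u^m_{xx}\|_{L^2}^2+C_\varepsilon(1+\|u^m\|_{H^1}^{q})$ for a suitable $q=q(p)>1$; the $\varepsilon$-term is absorbed into the dispersive smoothing, which — as for the $H^1_0$-bound of the previous step — is recovered via an $x$-weighted multiplier and yields control of $\int_0^T\|(u^m,v^m)(t)\|_{[H^2(0,L)]^2}^2\,dt$. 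The bilinear terms $a_1v^mv^m_x$, $a_2(u^mv^m)_x$ and their $b_2$-weighted analogues are treated in the same way (more easily), and the damping terms, after integration by parts, contribute only boundary values on $\partial\omega$ of the type $u^mu^m_x|_{\partial\omega}$, estimated by $C\|u^m\|_{H^1}\|u^m_x\|_{L^\infty}$ and absorbed as above. Writing $y(t):=\|(u^m,v^m)(t)\|_{[H^1(0,L)]^2}^2$, one arrives at $y(t)\le y(0)+C\int_0^t(1+y(s)^q)\,ds$, and a comparison argument shows that $y$ stays bounded on an interval $[0,T^\ast]$ whose length depends only on $y(0)=\|(u^0,v^0)\|_{[H^1(0,L)]^2}^2$. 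This is the mechanism that confines us to local existence when $p\ge 2$.

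It remains to pass to the limit and argue uniqueness. The uniform bounds give, along a subsequence, convergence of $(u^m,v^m)$ weakly-$\ast$ in $L^\infty(0,T^\ast;[H^1_0(0,L)]^2)$; bounding $(u^m_t,v^m_t)$ in $L^2(0,T^\ast;[H^{-2}(0,L)]^2)$ from the equations and invoking the Aubin--Lions lemma upgrades this to strong convergence in $C([0,T^\ast];[L^2(0,L)]^2)$, which together with the $L^\infty H^1$ bound and the embedding $H^1\hookrightarrow L^\infty$ suffices to pass to the limit in $a(u^m)u^m_x$, in the bilinear terms, and in $Gu^m$, $Gv^m$; the limit $(u,v)\in L^\infty(0,T^\ast;[H^1_0(0,L)]^2)$ then solves \eqref{1-1a1}--\eqref{gg2a1}. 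Uniqueness follows from the $L^2$ energy estimate for the difference of two solutions, whose nonlinear terms are dominated using the $L^\infty(0,T^\ast;H^1)$ norms of the solutions themselves, whose damping is bounded on $L^2$, and which then closes by Gronwall's inequality. The main obstacle throughout is precisely the $H^1$ \emph{a priori} bound for the quasilinear term $a(u)u_x$: it is here that the full strength of the hypothesis $a\in\mathcal C^2$ with the prescribed growth is needed, and the superlinearity ($q>1$, forced by $p\ge 2$) of the resulting differential inequality is exactly why $T^\ast$ must be allowed to depend on $\|(u^0,v^0)\|_{[H^1(0,L)]^2}$.
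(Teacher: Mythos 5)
Your central step---the $H^1$ a priori estimate obtained by testing the equations with $-b_2u_{xx}$ and $-v_{xx}$---does not close, and this is a genuine gap rather than a detail. For the boundary conditions \eqref{gg2a1} the favorable traces sit at $x=0$ only at the $L^2$ level. At the $H^1$ level the third-order terms give, already for the uncoupled linear part,
\begin{equation*}
\frac12\frac{d}{dt}\int_0^L u_x^2\,dx \;=\; \frac12\,u_{xx}^2(L,t)\;-\;\frac12\,u_{xx}^2(0,t)\;+\;\cdots,
\end{equation*}
and the coupling contributes $-a_3b_2\left[u_{xx}v_{xx}\right]_0^L$: the traces $u_{xx}(L,t)$, $v_{xx}(L,t)$ enter with the \emph{unfavorable} sign and are controlled by nothing, since the boundary conditions kill $u_x(L)$ and $v_x(L)$ but say nothing about second derivatives at $x=L$. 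So your claim that the boundary contributions reduce to a negative-definite quadratic form in $u_{xx}(0,t),v_{xx}(0,t)$ is not correct, and the differential inequality $y(t)\le y(0)+C\int_0^t(1+y^q)\,ds$ is never reached. The alternative route you sketch (truncate $a$, solve globally by Lemma \ref{existence1}, remove the truncation once $\|u_N\|_{H^1}<N$) is circular for the same reason: it presupposes exactly this $H^1$ bound. There is also a secondary legitimacy problem: in a Galerkin scheme the multiplier $-u^m_{xx}$ is in general not an admissible test function, because no basis of a second-order self-adjoint operator can be compatible with the three boundary conditions $w(0)=w(L)=w_x(L)=0$, so the identities you manipulate are not available at the approximate level without substantial extra work.

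This is precisely why the paper does not argue by energy estimates at the $H^1$ level. Its proof of Corollary \ref{existence1-1} rewrites \eqref{1-1a1}--\eqref{gg2a1} in the Duhamel form driven by the semigroup $\{T(t)\}_{t\ge0}$ of the linear system and runs a contraction mapping in the space $Y_{1,T}=C([0,T];X_1)\cap L^2(0,T;[H^2(0,L)]^2)$, where the needed gain of one derivative comes from the Kato-type smoothing of the linear semigroup (the estimates of \cite[Lemmas 2.11 and 2.12]{Zhang}, extended to the coupled system in \cite[Proposition 5.3]{bona}); the boundary traces at $x=L$ never appear, and $T^\ast$ depends only on $\|(u^0,v^0)\|_{[H^1(0,L)]^2}$ through the contraction constants. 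If you want to salvage an energy-type argument, you would have to produce the hidden-regularity control of $u_{xx}(L,\cdot)$, $v_{xx}(L,\cdot)$ in $L^2(0,T)$ by additional multiplier identities (or work with the modified, Hamiltonian-type functional), which is essentially reproving the smoothing estimates that the fixed-point approach imports directly.
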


\begin{proof} The ideas involved in the proof follow closely the previous arguments and those presented in \cite[Lemmas 2.11 and 2.12]{Zhang}. The extension of such results for the model under consideration was proved in \cite[Proposition 5.3]{bona} (see also \cite[Remark 5.5]{bona-zhang}). Therefore, we will omit the details.

We note that in order to apply the fixed point argument we first rewrite the system in the following integral form
\begin{align*}
U(t)&=T(t)(u^0,v^0)-\\
&\int^{t}_{0}T(t-\tau)\left(a(u)u_{x}+a_{1}vv_{x}+a_{2}(uv)_{x},\frac{a(v)}{b_{1}}v_{x}\right.\ +\left.\frac{a_{2}b_{2}}{b_{1}}uu_{x}+\frac{a_{1}b_{2}}{b_{1}}(uv)_{x} \right)d\tau,\nonumber
\end{align*}
where $\{T(t)\}_{t\geq 0}$ denotes the $C^0$ semigroup property generated by the linear part of the system. To obtain the global well-posedness one needs to establish the corresponding global a priori estimate in the space $H^1(0,L)$, which is not available.
\end{proof}

Using Lemma \ref{existence1} we prove the main result of this section:

\begin{theorem}\label{existence2}Let $a=a(x)$ be a $\mathcal{C}^2$ function such that
$$|a(x)|\leq C(1+|x|^p),\,\,|a'(x)|\leq C(1+|x|^{p-1}),\,\,|a''(x)|\leq C(1+|x|^{p-2}), \quad\forall\,x\in \mathbb{R},$$
where $C$ is a positive constant and $1\leq p < 4$. Then, for any $(u^0,v^0)\in X$, system \eqref{1-1a1}-\eqref{gg2a1} admits at least one solution $$(u,v)\in \mathcal{C}_w(\mathbb{R};X)\cap L^2_{loc}(\mathbb{R}^+;[H^1(0,L)]^2).$$
\end{theorem}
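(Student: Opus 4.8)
The plan is to obtain the global weak solution as a limit of smooth solutions, combining the a~priori estimates of Lemma~\ref{tech3} with a compactness argument. First I would approximate the initial data: given $(u^0,v^0)\in X$, pick a sequence $(u^0_n,v^0_n)\in [H^1_0(0,L)]^2$ (indeed one may take it in $D(\mathcal{A})$) with $(u^0_n,v^0_n)\to(u^0,v^0)$ in $X$. For the approximation at the level of the equation I would also regularize the nonlinearity: replace $a$ by a bounded truncation $a_n$ (e.g.\ $a_n(s)=a(s)$ for $|s|\le n$ and $a_n$ constant outside), so that $a_n$ satisfies the hypotheses of Lemma~\ref{existence1} with exponent $p=1$ and thus, for each $n$, the truncated system \eqref{1-1a1}--\eqref{gg2a1} with data $(u^0_n,v^0_n)$ has a unique global solution $(u_n,v_n)\in Y_{0,T}^2$ on every time interval $[0,T]$, by Lemma~\ref{existence1}. (Alternatively, one can work with the Galerkin method; the truncation route is slightly cleaner here because the well-posedness machinery is already in place.)

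Second, I would derive $n$-uniform bounds. The key point is that the energy identity of Lemma~\ref{tech3} holds for each truncated solution with $a$ replaced by $a_n$, and — because the dissipation terms in that identity have a fixed sign — it yields, together with the second estimate of Lemma~\ref{tech3}, a bound
\[
\|(u_n,v_n)\|_{C([0,T];X)}+\|(u_n,v_n)\|_{L^2(0,T;[H^1_0(0,L)]^2)}\le C\bigl(T,\|(u^0,v^0)\|_X\bigr)
\]
that is independent of $n$ (here one uses that the truncated data are bounded in $X$ uniformly in $n$, and that the constant in the a~priori estimate is monotone in the norm of the data and in $T$). Then, using the equations \eqref{1-1a1} themselves, one bounds $\partial_t u_n$ and $\partial_t v_n$ in $L^1(0,T;H^{-2}(0,L))$ (or a similar negative-order space): the linear terms $u_{xxx}$, $v_{xxx}$, $v_x$ are controlled by the $L^2(0,T;H^1)$ bound, the damping terms $Gu_n,Gv_n$ by \eqref{tech2.1}, and the nonlinear terms $a_n(u_n)(u_n)_x$, $vv_x$, $(uv)_x$, etc.\ by the estimates \eqref{tech2.2}--\eqref{tech2.4} of Lemma~\ref{tech2} (for the truncated $a_n$ these are uniform since $a_n$ obeys the $p=1$ growth with an $n$-independent constant — this is precisely why I truncate).

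Third, I would pass to the limit. By the Aubin--Lions--Simon lemma, the bounds $(u_n,v_n)$ bounded in $L^2(0,T;[H^1_0]^2)$ and $(\partial_t u_n,\partial_t v_n)$ bounded in $L^1(0,T;[H^{-2}]^2)$ give, along a subsequence, strong convergence $(u_n,v_n)\to(u,v)$ in $L^2(0,T;[L^2(0,L)]^2)$ (and a.e.), plus weak convergence in $L^2(0,T;[H^1]^2)$ and weak-$\ast$ convergence in $L^\infty(0,T;X)$. Strong $L^2$ convergence upgrades to strong convergence in $L^2(0,T;[L^2]^2)$ and, interpolating with the $H^1$ bound, even in $L^2(0,T;[L^4]^2)$ or better, which suffices to pass to the limit in the quadratic terms $u_nu_{n,x}$, $v_nv_{n,x}$, $(u_nv_n)_x$ in the distributional sense. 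For the term $a_n(u_n)(u_n)_x$: since $u_n\to u$ a.e.\ and $\sup_n\|u_n\|_{L^\infty(0,T;L^2)}<\infty$, on the set where $|u|\le M$ one has $a_n(u_n)=a(u_n)\to a(u)$ eventually, while the growth control $|a_n(s)|\le C(1+|s|^p)$ with $p<4$ together with the $L^2_t H^1_x\hookrightarrow L^\infty$-type and $L^2_tL^\infty_x$ bounds controls the tails; thus $a_n(u_n)(u_n)_x\rightharpoonup a(u)u_x$ in, say, $L^1_{loc}$. The linear terms pass trivially by weak convergence, and the damping operators $G$ are bounded linear on $L^2$ so $Gu_n\rightharpoonup Gu$. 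One also checks the initial condition is attained (from the $C([0,T];H^{-2})$ bound and $(u^0_n,v^0_n)\to(u^0,v^0)$) and the boundary conditions \eqref{gg2a1} survive in the trace sense because of the $L^2_tH^1_x$ bound and the continuity in a negative space. Finally a diagonal argument in $T\to\infty$ (or over an exhausting sequence of intervals) produces a global solution, and from the uniform bounds one reads off $(u,v)\in \mathcal{C}_w(\mathbb{R};X)\cap L^2_{loc}(\mathbb{R}^+;[H^1(0,L)]^2)$ as claimed.

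I expect the main obstacle to be the limit passage in the nonlinear term $a(u)u_x$ when $p$ is close to $4$: the quadratic coupling terms are standard, but $a(u)u_x=(A(u))_x$ with $A(s)=\int_0^s a$ growing like $|s|^{p+1}$, and one must certify that the uniform bounds ($L^\infty_tL^2_x\cap L^2_tH^1_x$, hence by Gagliardo--Nirenberg $L^{10}_{t,x}$ in one space dimension, and $u_x\in L^2_{t,x}$) are strong enough to make $A(u_n)\to A(u)$ in $L^1_{loc}$; a careful bookkeeping of exponents shows $p<4$ is exactly the threshold that makes this work, and this is the step where the restriction on $p$ is genuinely used (mirroring the role it plays in \cite{Zhang,nina}). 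Uniqueness is \emph{not} asserted in the statement and should not be attempted at this regularity — only existence of at least one weak solution is claimed.
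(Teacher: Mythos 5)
Your overall architecture is the same as the paper's: replace $a$ by approximations $a_n$ that are admissible for Lemma~\ref{existence1}, solve globally for each $n$, get $n$-uniform bounds in $L^\infty(0,T;X)\cap L^2(0,T;[H^1_0(0,L)]^2)$ from Lemma~\ref{tech3}, apply Aubin--Lions--Simon, and pass to the limit (the paper uses \cite[Corollary 4]{simon} twice and \cite[Lemma 1.4]{teman} for the $\mathcal{C}_w([0,T];X)$ regularity, exactly as you sketch). However, there is a genuine gap at the one point where the proof is actually delicate. Your justification of the uniform bound on $(\partial_t u_n,\partial_t v_n)$ rests on the claim that the truncated $a_n$ ``obeys the $p=1$ growth with an $n$-independent constant''; this is false for $p>1$: the best constant in $|a_n(s)|\le C'(1+|s|)$ for your truncation grows like $n^{p-1}$, so the appeal to Lemma~\ref{tech2} with $p=1$ does not give $n$-uniform control of $a_n(u_n)u_{n,x}$. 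Nor can this be repaired by a direct product estimate with the correct uniform $p$-growth: H\"older plus Gagliardo--Nirenberg gives $\|a_n(u_n)u_{n,x}\|_{L^1_x}\lesssim (1+\|u_{n,x}\|_{L^2}^{(p-1)/2})\|u_{n,x}\|_{L^2}$, which is integrable in time (using only $u_{n,x}\in L^2_{t,x}$) precisely when $p\le 3$; for $3<p<4$ the product is not controlled this way, so both the uniform $L^1(0,T;H^{-2})$ bound on the time derivatives and your ``tail control'' limit argument for $a_n(u_n)u_{n,x}$ are unproven as written.

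The paper closes exactly this gap by exploiting the divergence structure from the start: it sets $A_n(\varphi)=\int_0^\varphi a_n(s)\,ds$ and proves (Step 1) that $\{A_n(u_n)\}$, $\{A_n(v_n)\}$ are bounded in $L^\alpha((0,T)\times(0,L))$ for every $\alpha\in(1,\tfrac{6}{p+1}]$, via Gagliardo--Nirenberg and the energy bounds; this is where $1\le p<4$ is genuinely used (it guarantees $\tfrac{6}{p+1}>1$ with room to spare). Since $L^\alpha(0,L)\hookrightarrow H^{-1}(0,L)$, the terms $a_n(u_n)u_{n,x}=\partial_x A_n(u_n)$ are then bounded in $L^\alpha(0,T;H^{-2}(0,L))$, which (Step 2) yields the uniform bound on $(u_{n,t},v_{n,t})$ needed for Simon's compactness; and the limit of the nonlinearity is identified by combining a.e.\ convergence of $U_n$ (hence $A_n(u_n)\to A(u)$ a.e., using the uniform-on-compacts convergence $a_n\to a$) with the weak $L^\alpha$ bound and the weak-limit identification lemma (the paper's ``Step 3''), and only then differentiating in $x$ in $\mathcal{D}'$. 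You do gesture at this device ($a(u)u_x=(A(u))_x$ with $A$ of growth $|s|^{p+1}$) in your closing paragraph, but you leave the exponent bookkeeping as an expectation rather than a proof, and it is precisely that bookkeeping which both supplies the missing time-derivative bound and makes the limit passage legitimate for $2\le p<4$. Two minor further points: your truncation should be smoothed so that $a_n$ remains $\mathcal{C}^1$ (Lemma~\ref{existence1} is stated for $\mathcal{C}^1$ nonlinearities), and approximating the initial data is unnecessary, since Lemma~\ref{existence1} already accepts data in $X$.
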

\begin{proof} We consider a sequence of functions $\left\{a_{n}\right\}_{n\in \mathbb{N}}$  in $C^{\infty}_{0}(\mathbb{R};\mathbb{R})$, such that
\begin{equation}\label{ex2-1}
\begin{cases}
\vspace{1mm}a_{n}(\mu)\rightarrow a(\mu)\quad\mbox{ uniformly on each compact set of } \mathbb{R},\\
\left|a^{j}_{n}(\mu)\right|\leq C\left(1+\left|\mu\right|^{p-j}\right), \quad\forall\,\, n\geq 0,\quad \forall\,\, \mu\in \mathbb{R},\quad j=0,1,2,
\end{cases}
\end{equation}
where $C>0$. Observe that
$$\left|a_{n}(\mu)\right|\leq C(1+\left|\mu\right|^{p})\,\,\mbox{ and }\,\,\left|a'_{n}(\mu)\right|\leq C(n)(1+\left|\mu\right|^{p-1}).$$
For each $n$, Lemma \ref{existence1} give us the existence of a unique function
\[
U=(u,v)\in \mathcal{C}(\mathbb{R}^+;X)\cap L^2(\mathbb{R};[H^1_0(0,L)]^2)
\]
which solves
\begin{equation}
\left\{
\begin{array}
[c]{l}%
\vspace{1mm}u_{n,t}+a_{n}(u_{n})u_{n,x}+u_{n,xxx}+a_{3}v_{n,xxx}+a_{1}
v_{n}v_{n,x}\\
\qquad\qquad\qquad+a_{2}(u_{n}v_{n})_{x}+Gu_{n}=0,\\
\vspace{1mm}b_{1}v_{n,t}+rv_{n,x}+a_{n}(v_{n})v_{n,x}+b_{2}a_{3}
u_{n,xxx}+v_{n,xxx}\\
\vspace{1mm}\qquad\qquad\qquad+b_{2}a_{2}u_{n}u_{n,x}+b_{2}a_{1}(u_{n}
v_{n})_{x}+Gv_{n}=0,\\
\vspace{1mm}u_{n}(0,t)=u_{n}(L,t)=u_{n,x}(L,t)=0,\\
\vspace{1mm}v_{n}(0,t)=v_{n}(L,t)=v_{n,x}(L,t)=0,\\
u_{n}(x,0)=u_n^{0}(x),\,\,\,v_{n}(x,0)=v_n^{0}(x),
\end{array}
\right.  \label{ex2-2}
\end{equation}
where $0<x<L$ and $t>0$. Moreover, from Lemma \ref{tech3}, we get
\begin{equation}
\begin{array}{l}
\vspace{1mm}\left\|(u_{n}(\cdot,T),v_{n}(\cdot,T))\right\|^{2}_{X}+\displaystyle\frac{1}{b_{1}}\int^{T}_{0}[(\sqrt{b_{2}}u_{n,x}(0,t)+\sqrt{a^{2}_{3}b_{2}}v_{n,x}(0,t))^{2}
+\left(1-a^{2}_{3}b_{2}\right)v^{2}_{n,x}(0,t)]dt\nonumber\\
    +\displaystyle\frac{2}{b_{1}}\int^{T}_{0}\left(b_{2}||Gu_{n}||^{2}_{L^2(\omega)}+||Gv_{n}||^{2}_{L^2(\omega)}\right)dxdt=\left\|(u_n^0,v_n^0)\right\|^{2}_{X}\nonumber
\end{array}
\end{equation}
and
\begin{equation*}
\displaystyle\int^{T}_{0}\int^{L}_{0}u^{2}_{n,x}dxdt+\int^{T}_{0}\int^{L}_{0}v^{2}_{n,x}dxdt\leq C\,\{(1 + T)\left\|(u_n^{0},v_n^{0})\right\|^{2}_{X} + T\left\|(u_n^{0},v_n^{0})\right\|^{6}_{X}
+T\left\|(u_n^{0},v_n^{0})\right\|^{\frac{8+2p}{4-p}}_{X}\},
\end{equation*}
for any $T\geq 0$, where $C>0$. Due the estimates above, we have that the sequence $$\{U_n\}_{n\in \mathbb{N}}=\{(u_n,v_n)\}_{n\in \mathbb{N}} \text{ is bounded in } L^\infty(\mathbb{R}^+;X)\cap L^2_{loc}(\mathbb{R}^+;[H^1_0(0,L)]^2).$$ Therefore, there exists a function $U=(u,v)$ and a subsequence, still denoted by the same index $n$, such that
\begin{eqnarray}
&&U_{n}\rightarrow U\quad\mbox{ weakly }^\ast\quad\mbox{ in } \quad L^{\infty}(\mathbb{R}^+;X)\label{ex2-3}\\
&&U_{n}\rightarrow U\quad\mbox{ weakly }\quad\mbox{ in } \quad L^{2}_{loc}(\mathbb{R}^+; [H^{1}_{0}(0,L)]^2)\label{ex2-4}.
\end{eqnarray}

The goal now is to pass the limit in \eqref{ex2-2} to prove that $U=(u,v)$ is a weak solution of the problem \eqref{1-1a1}-\eqref{gg2a1}. Here, the main difficult is the study of the nonlinear terms. In order to deal with this, we introduce the following functions
\begin{eqnarray}
    A_{n}(\varphi):=\int^{\varphi}_{0}a_{n}(s)ds \quad\mbox{ and }\quad \widetilde{A}_{n}(\varphi):=\int^{\varphi}_{0}sa_{n}(s)ds
\end{eqnarray}
and we will prove that
\begin{eqnarray}
    (A_{n}(u_{n}),A_{n}(v_{n})) \rightarrow (A(u),A(v))\quad\mbox{in}\quad [\mathcal{D}'((0,L)\times (0,+\infty))]^2, \quad\mbox{ as } n\rightarrow\infty.
\end{eqnarray}
To obtain this result, we divide the prove in several steps:

\vglue 0.2 cm

\noindent {\sc Step 1.} For any $T >0$ and $\alpha\in (1,\frac{6}{p+1}]$, the sequence $\{(A_{n}(u_{n}),A_{n}(v_{n}))\}_{n\in \mathbb{N}}$ is bounded in the space $[L^\alpha((0,T)\times (0,L))]^2$.

\vglue 0.1 cm

Indeed, from \eqref{ex2-1}
\begin{eqnarray}
\left|A_{n}(\varphi)\right|\leq \int^{\varphi}_{0}\left|a_{n}(s)\right|ds\leq \int^{\varphi}_{0}C_1(1+\left|s\right|^{p})dv\leq C_1(1+\left|\varphi\right|^{p+1}),\nonumber
\end{eqnarray}
which gives that
\begin{eqnarray}
    \left|A_{n}(\varphi)\right|^{\alpha}\leq C_2(1+\left|\varphi\right|^{\alpha(p+1)}),\nonumber
\end{eqnarray}
for some positive constants $C_1$ and $C_2$ that does not depend on $n$. Then, since $$\frac{\alpha(p+1)-2}{2}\leq 2,$$ we can combine Lemma \ref{tech3} and Gagliardo-Nirenberg's inequality to obtain
\begin{align*}
\int^{T}_{0}\int^{L}_{0}\left|A_{n}(u_{n})\right|^{\alpha}dxdt&\leq C_2\left(TL+\int^{T}_{0}\int^{L}_{0}\left|u_{n}\right|^{\alpha(p+1)}dxdt\right)\\
&\leq C_2\left(TL+\vspace{1mm}\int^{T}_{0}\left\|u_{n}\right\|^{\alpha(p+1)-2}_{L^{\infty}(0,L)}\left\|u_{n}\right\|^{2}_{L^{2}(0,L)}dt\right)\\
&\leq C_2TL+C2C_3\int^{T}_{0}\left\|u_{n}\right\|^{\frac{\alpha(p+1)-2}{2}}_{L^{2}(0,L)}\left\|u_{n,x}\right\|^{\frac{\alpha(p+1)-2}{2}}_{L^{2}(0,L)}\left\|u_{n}\right\|^{2}_{L^{2}(0,L)}dt\\
&\leq C_2\left(TL+C_3\left\|u^0\right\|^{\frac{\alpha(p+1)}{2}+1}\displaystyle\int^{T}_{0}\left\|u_{n,x}\right\|^{\frac{\alpha(p+1)-2}{2}}_{L^{2}(0,L)}dt
\right)\\
&\leq C_4,
\end{align*}
for some $C_3 > 0$ and $C_4=C_4(T,||(u^0,v^0)||_X) > 0$. Analogously,
\begin{eqnarray}
\int^{T}_{0}\int^{L}_{0}\left|A_{n}(v_{n})\right|^{\alpha}dxdt&\leq& C_4,\nonumber
\end{eqnarray}
which complete the proof of the Step 1.

\vglue 0.2 cm

\noindent {\sc Step 2.}  For any $T>0$ and $\alpha\in (1,\frac{6}{p+1}]$, the sequence  $\{U_{n,t}\}_{n\in \mathbb{N}}=\{(u_{n,t},v_{n,t})\}_{n\in \mathbb{N}}$
is bounded in $L^\alpha(0,T;[H^{-2}(0,L)]^2)$.

\vglue 0.1 cm

The estimates obtained for $\{U_n\}_{n\in \mathbb{N}}$ guarantees that the terms $v_nv_{n,x}, (u_nv_n)_x$, $u_nu_{n,x}$ and $(u_nv_n)_x$ that appears in \eqref{ex2-2} are bounded in $L^\alpha(0,T;[H^{-2}(0,L)]^2 )$. In fact, observe that $\alpha\leq 2$, $L^1(0,L)\subset H^{-2}(0,L)$ and $$\|u_n v_{n,x}\|_{L^2(0,T;L^1(0,L))}\leq \|u_n\|_{L^\infty(0,T;L^2(0,L))}\|v_n\|_{L^2(0,T;H^1_0(0,L))}.$$ The same result is valid for the linear terms. On the other hand, due to the embedding $$L^{\alpha}(0,L)\hookrightarrow H^{-1}(0,L)$$ and by using Step 1 we conclude that $$\partial_x(A_n(u_n),A_n(v_n)) = (a(u_n)u_{n,x},a(v_n)v_{n,x}) \mbox{ is bounded in } [L^2(0,T;[H^{-2}(0,L)]^2)]^2.$$ 
Now, observing that
\begin{align*}
u_{n,t}&=-(a(u_{n})u_{n,x}+u_{n,xxx}+a_{3}v_{n,xxx}+a_{1}v_{n}v_{n,x}+a_{2}(u_{n}v_{n})_{x}+Gu_{n})
\end{align*}
and
\begin{align*}
b_{1}v_{n,t}&=-(rv_{n,x}+a(v_{n})v_{n,x}+b_{2}a_{3}u_{n,xxx}+v_{n,xxx}+b_{2}a_{2}u_{n}u_{n,x})-(b_{2}a_{1}(u_{n}v_{n})_{x}+Gv_{n})
\end{align*}
the result holds.

\vglue 0.2 cm

\noindent {\sc Step 3. (Ergoroff Theorem)} {\it Let $\Omega$ be an open subset in $\mathbb{R}^N$. If $\{f_n\}_{n\in \mathbb{N}}$ is a sequence in $L^p(\Omega)$, with $1 < p <\infty$, such that $f_n\rightharpoonup f$ and $f_n(x)\rightarrow g(x)$ a.e., as $n\rightarrow\infty$, then $f(x)=g(x)$ a.e.}

\vglue 0.2 cm

\noindent Now, by the Steps 1--3,  we can complete the proof. Indeed, since 
\[
\{U_n\}_{n\in \mathbb{N}} \text{ is bounded in } L^2(0,T;[H^1_0(0,L)]^2)
\]
and 
\[
\{U_{n,t}\}_{n\in \mathbb{N}} \text{ in } L^2(0,T;[H^{-2}(0,L)]^2),
\]
from \cite[Corollary 4]{simon}, we obtain a subsequence, still denoted by the same index, such that
\begin{equation}\label{ex2-5}
U_n\rightarrow U\quad\mbox{ in }\quad [L^2(0,T;L^2(0,L))]^2,\quad\mbox{ strongly and a. e.}
\end{equation}
Then, from \eqref{ex2-1} and \eqref{ex2-5}, we have
\begin{eqnarray}
(A_{n}(u_{n}(x,t)),A_{n}(v_{n}(x,t)))\rightarrow (A(u(x,t)),A(v(x,t)))\text{ a.e. for } (x,t)\in (0,L)\times\mathbb{R}^+.\nonumber
\end{eqnarray}
Moreover, from Step 1, we can pass to a subsequence, if necessary, to obtain a function $$g=(g_1,g_2) \in L^\alpha(0,T; [L^\alpha(0,L)]^2)$$ for which
$$(A_{n}(u_{n}(x,t)),A_{n}(v_{n}(x,t)))\rightharpoonup (g_1,g_2)\quad\mbox{ weakly in } [L^\alpha(0,T; L^\alpha(0,L))]^2.$$
Consequently, Ergoroff Theorem (see Step 3) allows us to conclude that 
$$(A(u(x,t)),A(v(x,t)))=(g_1,g_2)$$ and then
$$(A_{n}(u_{n}(x,t)),A_{n}(v_{n}(x,t)))\rightarrow (A(u(x,t)),A(v(x,t)))\text{ in }[\mathcal{D}'((0,L)\times (0,+\infty))]^2.$$
Taking the spatial derivative we deduce that
$$(a_n(u_n)u_{n,x},a_n(v_n)v_{n,x})\rightarrow (a(u)u_x,a(v)v_x)\text{ in }[\mathcal{D}'((0,L)\times (0,+\infty))]^2.$$
Finally, putting the convergences above together we can pass the weak limit in the system \eqref{ex2-2}. However, to conclude that $U$ is a weak solution of \eqref{ex2-2} it remains to prove that $U$ satisfies $$U(x,0)=(u^0(x),v^0(x))$$ and $$U\in \mathcal{C}_w([0,T]; X).$$ As $\{U_n\}_{n\in \mathbb{N}}$ is bounded in $L^\infty(0,T; X)$
and $$\{U_{n,t}\}_{n\in \mathbb{N}} \in L^\alpha(0,T;[H^{-2}(0,L)]^2),$$ with $\alpha > 1$, we can apply again \cite[Corollary 4]{simon} to obtain a subsequence $\{U_n\}_{n\in \mathbb{N}}$ satisfying
\begin{equation}\label{ex2-6}
U_n\rightarrow U\quad\mbox{ in }\quad \mathcal{C}([0,T];[H^{-1}(0,L)]^2),\quad\mbox{ for any } T>0.
\end{equation}
In particular,
$$U^0_n(x)=U_n(x,0)\rightarrow U(x,0),$$
since $$U\in L^\infty(0,T; X)\cap \mathcal{C}([0,T]; [H^{-1}(0,L)]^2),$$ from \cite[Lemma 1.4]{teman} we deduce that $U\in\mathcal{C}_w([0,T]; X)$. Therefore, the prove of Theorem \ref{existence2} is archived.
\end{proof}

\section{Exponential stabilization\label{sec3}}

In this section we prove the uniform exponential decay of the total energy $E_s(t)$, defined by \eqref{6a}, associated to the following system 
\begin{equation}\label{1-1a3}
\left\{
\begin{array}{l}
\vspace{1mm}u_t+u_{xxx}+a_3v_{xxx}+a(u)u_x + a_1vv_x + a_2(uv)_x + Gu =  0 \\
b_1v_t + rv_x + v_{xxx} + b_2a_3u_{xxx} + a(v)v_x + b_2a_2uu_x + b_2a_1(uv)_x + Gv = 0,\\
u(0,t)=0,\,\,u(L,t)=0,\,\,u_{x}(L,t)=0,\\
v(0,t)=0,\,\,v(L,t)=0,\,\,v_{x}(L,t)=0,\\
u(x,0)= u^0(x), \quad v(x,0)=  v^0(x)
\end{array}
\right.
\end{equation}
where $0<x<L$, $t>0$, with
\begin{equation}\label{operator_new1}
Gu=1_{\omega}\left(u(t,x)-\frac{1}{|\omega|}\int_{\omega}u(t,x)dx\right)
\end{equation}
and
\begin{equation}\label{operator_new_11}
Gv=1_{\omega}\left(v(t,x)-\frac{1}{|\omega|}\int_{\omega}v(t,x)dx\right),
\end{equation}
where $\omega\subset(0,L)$ is a nomempty open set and $1_{\omega}$ denotes the characteristic function of the set $\omega$. 

To show our main result we will use the so-called "Compactness-Uniqueness Argument" due to J.-L. Lions (see \cite{Lions}) which reduces the problem to prove a Unique Continuation Property for weak solutions. As the weak solution of \eqref{1-1a3} may fail to be unique, we will say that the solution is exponential stable if the following property holds.

\begin{definition}\label{def}
System \eqref{1-1a3} is said to be locally uniformly exponentially stable in $X$ if for any $R > 0$ there exist positive constants $C$ and $\alpha$ such that for any $U^0=(u^0,v^0)$ with $E(0) \leq R$ and for any weak solution $U=(u,v)$ of \eqref{1-1a3}, the following holds
$$E_s(t) \leq C\, E_s(0) e^{-\alpha t},\quad \forall\,\,t > 0.$$ If the constant $\alpha$ is independent of $R$, the system \eqref{1-1a3} is said to be globally uniformly exponentially stable in $X$.
\end{definition}

The next proposition give us a local uniform result.

\begin{proposition}\label{local-dec}
Let $a=a(x)$ be a $\mathcal{C}^2$ function such that $$|a(x)|\leq C(1+|x|^p),\,\,|a'(x)|\leq C(1+|x|^{p-1}),\,\,|a''(x)|\leq C(1+|x|^{p-2}), \quad\forall\,x\in \mathbb{R}$$ where $C$ is a positive constant and $1\leq p < 4$. Then, the system \eqref{1-1a3} is locally uniformly stable.
\end{proposition}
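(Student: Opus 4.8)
The plan is to establish the local uniform exponential decay via the classical argument of J.-L.\ Lions: it suffices to prove that for some $T>0$ and some constant $C=C(R,T)>0$, every weak solution $U=(u,v)$ of \eqref{1-1a3} with $E_s(0)\le R$ satisfies the \emph{observability-type inequality}
\begin{equation}\label{plan-obs}
E_s(0)\le C\int_0^T\left[u_x^2(0,t)+v_x^2(0,t)+\|Gu\|^2_{L^2(\omega)}+\|Gv\|^2_{L^2(\omega)}\right]dt.
\end{equation}
Indeed, once \eqref{plan-obs} is known, the dissipation identity in Lemma \ref{tech3} (which gives $E_s(T)-E_s(0)$ equal to minus the bracket on the right of \eqref{plan-obs}, up to the fixed positive constant $\tfrac{1}{2b_1}\min(1,1-a_3^2b_2)$ and the weights $b_1,b_2$) yields $E_s(T)\le \gamma E_s(0)$ with $\gamma=\gamma(R,T)<1$; applying this on successive intervals $[nT,(n+1)T]$ and using that $E_s$ is nonincreasing produces the exponential estimate $E_s(t)\le C\,E_s(0)e^{-\alpha t}$ with $\alpha=-\tfrac{1}{T}\log\gamma>0$. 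So the whole proof reduces to \eqref{plan-obs}.

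To prove \eqref{plan-obs} I would argue by contradiction. Suppose it fails; then there is a sequence of weak solutions $U_n=(u_n,v_n)$ with $E_s^{(n)}(0)\le R$, $\lambda_n:=E_s^{(n)}(0)>0$, and
\[
\int_0^T\left[u_{n,x}^2(0,t)+v_{n,x}^2(0,t)+\|Gu_n\|^2_{L^2(\omega)}+\|Gv_n\|^2_{L^2(\omega)}\right]dt\le \tfrac1n\,\lambda_n .
\]
Set $w_n=u_n/\sqrt{\lambda_n}$, $z_n=v_n/\sqrt{\lambda_n}$; then $E_s$ of $(w_n,z_n)$ at $t=0$ equals $1$, the $X$-norm of $(w_n,z_n)(0)$ is bounded, and by Lemma \ref{tech3} (applied to $U_n$ and rescaled) the sequence $(w_n,z_n)$ is bounded in $L^2(0,T;[H^1_0(0,L)]^2)\cap C([0,T];X)$, while the boundary-trace and damping terms for $(w_n,z_n)$ go to $0$ in $L^2(0,T)$. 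Using the equations to bound $(w_{n,t},z_{n,t})$ in $L^1(0,T;[H^{-2}(0,L)]^2)$ (exactly as in Steps 1--2 of the proof of Theorem \ref{existence2}, noting the nonlinear and $a(\cdot)$ terms are controlled by the $Y_{0,T}$-type estimates of Lemma \ref{tech2}), Aubin--Lions--Simon gives a subsequence converging strongly in $[L^2(0,T;L^2(0,L))]^2$ to a limit $(w,z)$. Passing to the limit in the (rescaled) system — here the nonlinear terms $w_nz_{n,x}$ etc.\ converge in the sense of distributions because of the strong $L^2$ convergence and the uniform $L^2(0,T;H^1)$ bound, just as in the compactness argument already carried out in Section \ref{sec2} — one finds that $(w,z)$ is a weak solution of \eqref{1-1a3} on $(0,T)$ with the \emph{additional} properties
\[
w_x(0,\cdot)=z_x(0,\cdot)=0,\qquad Gw=Gz=0 \ \text{ in } L^2(0,T;L^2(\omega)).
\]
In particular $w$ and $z$ are, on $\omega\times(0,T)$, spatially constant functions of $t$ (the mean-zero-on-$\omega$ part vanishes), and combined with $w_x=z_x=0$ at $x=0$ this forces $w\equiv c_1(t)$, $z\equiv c_2(t)$ on $\omega\times(0,T)$; the boundary condition $w(L,t)=0$ (if $L\in\bar\omega$) or a further propagation argument then forces these constants to be $0$, so that $w=z=0$ in $\omega\times(0,T)$, i.e.\ $Gw=Gz=0$ \emph{and} $w=z=0$ on $\omega\times(0,T)$.

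At this point I would invoke the unique continuation property established by Nina \textit{et al.}\ \cite{nina} (the same UCP that underlies Theorem $\mathcal{B}$): a weak solution $(w,z)$ of the Gear--Grimshaw system \eqref{1-1a3} that vanishes on $\omega\times(0,T)$ must vanish identically, $(w,z)\equiv(0,0)$ on $(0,L)\times(0,T)$. But then $E_s(0)$ of $(w,z)$ is $0$. The contradiction comes from the conservation/normalization: we need to show $E_s(0)$ of the limit equals $1$. This is the delicate point and is exactly the \emph{main obstacle} — weak convergence of $(w_n,z_n)(0)$ in $X$ is not enough to keep the norm equal to $1$; I would recover it from the strong $L^2(0,T;L^2)$ convergence of $(w_n,z_n)$ together with the (rescaled) energy-dissipation identity of Lemma \ref{tech3}: since the boundary and damping contributions tend to $0$, $E_s^{(n)}(t)$ is essentially constant $=1$ on $[0,T]$ up to $o(1)$, so $\int_0^T E_s^{(n)}(t)\,dt\to T$; on the other hand the strong $L^2$ convergence gives $\int_0^T E_s^{(n)}(t)\,dt\to \int_0^T E_s(t)\,dt$ for the limit, and since the limit solution also satisfies the dissipation identity with vanishing boundary/damping terms, $E_s(t)$ is constant on $[0,T]$, whence that constant is $1$; this contradicts $(w,z)\equiv 0$. (A technical care point is that Lemma \ref{tech3} is stated for smooth $a$ and exact solutions; for general weak solutions one uses the $a_n$-approximation and lower semicontinuity, passing the identity to an inequality, which is all that the above comparison needs.) Granting this, \eqref{plan-obs} holds for every $R$, and the proof of Proposition \ref{local-dec} is complete.
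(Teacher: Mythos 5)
Your overall plan (observability plus compactness--uniqueness plus the unique continuation property of Nina \emph{et al.}) is the same as the paper's, but two steps do not go through as written. The first is the normalization in the contradiction argument. You rescale by the \emph{initial} energy $\lambda_n=E_s^{(n)}(0)$ and must then show the limit is nontrivial; your fix is to recover $E_s(0)=1$ (equivalently $\int_0^T E_s^{(n)}(t)\,dt\to T$) from the dissipation identity of Lemma \ref{tech3} ``passed to an inequality''. But the inequality that survives the approximation/weak-limit procedure is $E_s(t)+\text{(boundary and damping terms)}\le E_s(0)$: a weak solution may dissipate \emph{more} than those terms account for. Your argument needs the reverse bound $E_s^{(n)}(t)\ge 1-o(1)$, i.e. exactly the direction that is not available for weak solutions in the range $2\le p<4$, where neither uniqueness nor the energy identity is known. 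The paper avoids this entirely by normalizing with $\sigma_n=\|U_n\|_{L^2(0,T;X)}$: then $\|Z_n\|_{L^2(0,T;X)}=1$ by construction, and the strong $L^2(0,T;L^2)$ convergence furnished by Simon's compactness theorem gives $\|Z\|_{L^2(0,T;X)}=1$ for the limit, so nondegeneracy is automatic. With that normalization one also needs the preliminary multiplier estimate \eqref{dec2} (multipliers $(T-t)b_2u$ and $(T-t)v$) to convert the space--time bound \eqref{dec3} into the observability inequality \eqref{dec1}; your direct form of the observability inequality skips this step but then cannot compensate for it.

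The second problem is how you dispose of the constants on $\omega$. From $Gw=Gz=0$ you only get $w=c_1(t)$, $z=c_2(t)$ on $\omega\times(0,T)$. Your proposal to kill these constants --- ``the boundary condition $w(L,t)=0$ if $L\in\bar\omega$, or a further propagation argument'' --- is not available: $\omega$ is an arbitrary nonempty open subset of $(0,L)$, it need not touch $x=L$, and no propagation argument is supplied; moreover the UCP you invoke requires \emph{vanishing} (not mere constancy) on $\omega\times(0,T)$, so applying it before the constants are resolved is circular. The paper's route is different: restrict the limit equation to $\omega\times(0,T)$, where all $x$-derivatives of the limit $Z$ vanish, to deduce $Z_t=0$ there, hence $s_1,s_2$ are constants in time; then the unique continuation result of \cite[Corollary 3]{nina} (which addresses exactly this constant-on-$\omega$ situation and requires the additional regularity $Z\in L^\infty(0,T;[H^1(0,L)]^2)$, as noted in the paper's remark) yields that $Z$ is a constant on all of $(0,L)\times(0,T)$, and only then does the boundary condition $Z(L,t)=0$ force $Z\equiv0$, contradicting $\|Z\|_{L^2(0,T;X)}=1$. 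Until these two points are repaired --- in practice, by adopting the $L^2(0,T;X)$ normalization and the restriction-to-$\omega$ argument --- the proposed proof has genuine gaps.
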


\begin{proof} To obtain the exponential decay of $E_s(t)$ is known be necessary to prove the following \textit{observability inequality} 
\begin{equation}\label{dec1}
\begin{array}{l}
\vspace{1mm} E_s(0)\leq C \displaystyle\int_0^T\Big(b_2||Gu||^2_{L^2(\omega)}+ ||Gv||^2_{L^2(\omega)}\Big)\\
+\displaystyle\int_0^T\Big[\frac{1}{2}\left(\sqrt{b_{2}}u_{x}(0,t)+\sqrt{a^{2}_{3}b_{2}}v_{x}(0,t)\right)^{2}
\displaystyle+\frac{1}{2}\left(1-a^{2}_{3}b_{2}\right)v^{2}_{x}(0,t)\Big]dt,
\end{array}
\end{equation}
for every finite energy solution of \eqref{1-1a3}, where $C=C(R,T)$ is a positive constant. To prove \eqref{dec1} we first multiply the first equation of \eqref{1-1a3} by $(T-t)b_2 u$ and add with the second one multiplied by $(T-t)v$. Therefore, by using integration by parts, we have
\begin{equation}\label{dec2}
\begin{array}{l}
\displaystyle\frac{b_{2}}{2}\int^{L}_{0}(u^{0})^{2}dx+\frac{b_{1}}{2}\int^{L}_{0}(v^{0})^{2}dx\leq \displaystyle\frac{1}{T}\left( \frac{b_{2}}{2}\int^{T}_{0}\int^{L}_{0}u^{2}dxdt+\frac{b_{1}}{2}\int^{T}_{0}\int^{L}_{0}v^{2}dxdt\right)\\
\vspace{1mm}+\displaystyle\frac{1}{2}\int^{T}_{0}\Big[(\sqrt{b_{2}}u_{x}(0,t)+\sqrt{a^{2}_{3}b_{2}}v_{x}(0,t))^{2}
    +\left(1-a^{2}_{3}b_{2}\right)v^{2}_{x}(0,t)\Big]dt\\
    +\displaystyle\int^{T}_{0}(b_{2}||Gu||^2_{L^2(\omega)}+||Gv||^2_{L^2(\omega)})dt.
\end{array}
\end{equation}
Thus, to obtain \eqref{dec1} we have to prove the following claim:

\vglue 0.2 cm

\textit{For any $T>0$ and $R>0$, there exists a constant $C(R,T)>0$ satisfying
\begin{equation}
\begin{array}{lll}\label{dec3}
\vspace{1mm}\displaystyle\frac{b_{2}}{b_1}\int^{T}_{0}\int^{L}_{0}u^{2}dxdt+\int^{T}_{0}\int^{L}_{0}v^{2}dxdt\\
\vspace{1mm}\displaystyle\leq C(R,T)\Big(\int^{T}_{0}\Big[(\sqrt{b_{2}}u_{x}(0,t)+\sqrt{a^{2}_{3}b_{2}}v_{x}(0,t))^{2}
    +\left(1-a^{2}_{3}b_{2}\right)v^{2}_{x}(0,t)\Big]dt\\
+ \displaystyle\int^{T}_{0}2(b_2||Gu||^2_{L^2(\omega)}+||Gv||^2_{L^2(\omega)})dt\Big)
\end{array}
\end{equation}
for any weak solution $U$ of \eqref{1-1a3}, whenever $||(u^0,v^0)||_X \leq R$.}

\vglue 0.1 cm

Indeed, if \eqref{dec3} is not true, there exists a sequence of functions $$\{U_{n}\}_{n \in\mathbb{N}}=\{(u_{n},v_{n})\}_{n \in \mathbb{N}}\in C_w([0,T];X)\cap L^2(0,T; [H^1_0(0,L)]^2),$$ such that
\begin{eqnarray}\label{dec3.1}
    ||(u_{n}(\cdot,0),v_{n}(\cdot,0))||_X \leq R,
\end{eqnarray}
solution of
\begin{equation}\label{dec4}
\left\{
\begin{array}{l}
\vspace{2mm}
b_1U_{n,t}+AU_{n,xxx}+RU_{n,x}+B(U_{n})U_{n,x}+C(U_n)U_n + \mathcal{G}U_{n}=0,
\\ \vspace{2mm}U_{n}(0,t)= U_{n}(L,t)= U_{n,x}(L,t)=0, \\
U_{n}(x,0)=U_n^0(x),
\end{array}
\right.
\end{equation}
where $x\in(0,L),\,t>0$ with $U=(u,v)$,
\[
A=\left(\begin{array}[c]{cc}b_1 & b_1a_3\\b_2a_3 & 1 \end{array}\right),
\]
\[
B(U)=\left(\begin{array}[c]{cc}b_1a_2v & b_1(a_2u+a_1v)\\ b_2(a_2u+a_1v) & b_2a_1u \end{array}\right)
\]
\[
R=\left(\begin{array}[c]{cc}0 & 0\\ 0 & r\end{array}\right)
\]
and
\[
C(U)=\left(\begin{array}[c]{cc}b_1a(u) & 0\\0 & a(v)\end{array}\right).
\]
$\mathcal{G}$ is as diagonal matrix whose diagonal elements are damping operators $b_2G$ and $G$. Additionally,
\begin{equation}\label{dec5}
\lim_{n\rightarrow \infty}\frac{\frac{b_2}{b_1}||u_n||_{L^2(0,T;
L^2(0,L))}^2+||v_n||_{L^2(0,T; L^2(0,L))}^2}{I_n}=\infty,
\end{equation}
where
\begin{equation}
\begin{array}{l}
\vspace{1mm} I_n=\displaystyle\int_0^T
\left[(\sqrt{b_{2}}u_{n,x}(0,t)+\sqrt{a^{2}_{3}b_{2}}v_{n,x}(0,t))^{2}
    +\left(1-a^{2}_{3}b_{2}\right)v^{2}_{n,x}(0,t) \right] dt\\
    + 2\displaystyle\int^{T}_{0}(b_2||Gu||^2_{L^2(\omega)}+||Gv||^2_{L^2(\omega)})dt.\nonumber
\end{array}
\end{equation}
Let
\begin{equation}\label{dec6}
\sigma_{n}:=\left( \frac{b_2}{b_1}||u_n||_{L^2(0,T; L^2(0,L))}^2+||v_n||_{L^2(0,T;
L^2(0,L))}^{2}\right)^{\frac{1}{2}}=||U_{n}||_{L^2(0,T;X)}
\end{equation}
and consider
\begin{equation}\label{dec7}
Z_{n}(x,t):=\frac{1}{\sigma_{n}}U_{n}(x,t)=
\left(
\begin{array}{c}
y_{n}(x,t)\\ w_{n}(x,t)
\end{array}
\right).
\end{equation}
For each $n\in \mathbb{N}, \, Z_{n}$ satisfies the following system
\begin{equation}\label{dec8}
\left\{
\begin{array}{l}
\vspace{2mm} b_1Z_{n,t}+AZ_{n,xxx}+RZ_{n,x}+\sigma_nB(Z_{n})Z_{n,x}+ C(\sigma_nZ_n)Z_n + \mathcal{G}Z_{n}=0,
\\ \vspace{2mm}Z_{n}(0,t)=Z_{n}(L,t)=Z_{n,x}(L,t)=0, \\
Z_{n}(x,0)=Z_n^0(x)=\frac{U_n^0(x)}{\sigma_n},
\end{array}
\right.
\end{equation}
with $0<x<L$ and $t>0$,
\begin{equation}\label{dec9}
||Z_{n}||_{L^{2}(0,T; X) }^{2}=1
\end{equation}
and
\begin{equation}\label{dec10}
\begin{array}{l}
\vspace{1mm}\displaystyle\int_0^T
\displaystyle[(\sqrt{b_{2}}y_{n,x}(0,t)+\sqrt{a^{2}_{3}b_{2}}w_{n,x}(0,t))^{2}+\left(1-a^{2}_{3}b_{2}\right)w_{n,x}^{2}(0,t)]dt\\
     + \displaystyle\int_0^T2(b_2||Gy_n||^2_{L^2(\omega)} + ||Gw_n||^2_{L^2(\omega)})dt\rightarrow 0,
\end{array}
\end{equation}
as $n\rightarrow\infty$. Observe that the energy dissipation law and \eqref{dec3.1} guarantee that $\sigma_n$ is bounded. Then, extracting a subsequence, still denoted by the same index, we can assume that
$$\sigma_n\rightarrow \sigma  \geq 0.$$
Moreover, combining \eqref{dec2}, \eqref{dec9}  and \eqref{dec10} we deduce that $||Z_n^0||_X$ is bounded. Then, following the arguments used in the proof of Theorem \ref{existence2}, there exists a function $Z=(y,w)$ such that
\begin{equation}
\begin{array}{l}\label{dec11}
\vspace{2mm}Z_{n}\rightharpoonup Z\quad\mbox{ in } \quad L^{\infty}(0,T;X)\quad \mbox{ weak * },\\
\vspace{2mm}Z_{n}\rightharpoonup Z\quad\mbox{ in }\quad L^{2}(0,T;[H^{1}(0,L)]^2)\quad \mbox{ weak },\\
\vspace{2mm}Z_{n}\rightarrow Z\quad\mbox{ in } \quad L^{2}(0,T;X)\quad \mbox{ a. e. },\\
\vspace{2mm}Z_{n}\rightarrow Z\quad\mbox{ in } \quad C([0,T];[H^{-1}(0,L)]^2),\\
    C(\sigma_{n}Z_{n})Z_{n,x}\rightarrow C(\sigma Z)Z_{x}\quad\mbox{ in } \quad
D'((0,L)\times(0,T)).
\end{array}
\end{equation}
The last convergence follows from the fact that
\begin{eqnarray}
    \left|a(\sigma_{n}\mu)\right|\leq C(1+\left|\sigma_{n}\right|^{p}\left|\mu\right|^{p})\leq C'(1+\left|\mu\right|^{p}),\nonumber
\end{eqnarray}
where $C'$ is a positive constant. Consequently, by \eqref{dec10} and \eqref{dec11} we obtain
\begin{equation}\label{dec12}
||Z||_{L^2(0,T;X)}=1
\end{equation}
and
\begin{equation}\label{dec13}
\begin{array}{l}
\vspace{1mm}\displaystyle\int_0^T
\left[(\sqrt{b_{2}}y_{x}(0,t)+\sqrt{a^{2}_{3}b_{2}}w_{x}(0,t))^{2}
    +\left(1-a^{2}_{3}b_{2}\right)w^{2}_{x}(0,t) \right]dt\\
     + \displaystyle\int_0^T2(b_2||Gy||^2_{L^2(\omega)}+||Gw||^2_{L^2(\omega)})dt \leq 0.
\end{array}
\end{equation}
The previous statements ensures that $Z$ fulfills
\begin{equation}\label{dec14}
\left\{
\begin{array}{l}
\vspace{2mm} b_1Z_t+AZ_{xxx}+RZ_x+\sigma B(Z)Z_x + C(\sigma Z)Z+ \mathcal{G}Z=0,\\ Z(0,t)=Z(L,t)=0,
\end{array}
\right.
\end{equation}
in $D'(\omega\times(0,T))$. In addition, from \eqref{dec13}, we get $$Gy=0$$ and $$Gw=0,$$
or equivalently,
$$y(x,t)-\frac{1}{|\omega|}\int_{\omega}y(x,t)dx=0 \iff y(x,t)=\frac{1}{|\omega|}\int_{\omega}y(x,t)dx$$
and
$$w(x,t)-\frac{1}{|\omega|}\int_{\omega}w(x,t)dx=0 \iff w(x,t)=\frac{1}{|\omega|}\int_{\omega}w(x,t)dx,$$
which implies that 
$$y(x,t)=s_1(t) \text{ on } \omega\times(0,T)$$
and
$$w(x,t)=s_2(t) \text{ on } \omega\times(0,T),$$
for some functions $s_1(t)$ and $s_2(t)$. Therefore, we have that $Z$ fulfills
\begin{equation}\label{dec14a}
\left\{
\begin{array}{l}
\vspace{2mm} b_1Z_t+AZ_{xxx}+RZ_x+\sigma B(Z)Z_x + C(\sigma Z)Z+ \mathcal{G}Z=0,\\ Z(0,t)=Z(L,t)=0,\\ Z=\left(
\begin{array}{c}
s_1(t)\\ s_2(t)
\end{array}
\right) \text{ on } \omega\times(0,T). \\
\end{array}
\right.
\end{equation}
The first equation of \eqref{dec14a} gives $Z^{\prime}(x,t)=0$ implying $s^{\prime}_1=s^{\prime}_2=0$ which, combined with a unique continuation property proved in \cite[Corollary 3]{nina}, yields that $Z(x,t)\equiv s$ for some constant $s\in\mathbb{R}$ in $(0,L)\times(0,T)$. Since $Z(L,t)=0$, we deduce that
\begin{equation}\label{dec14-1}
Z\equiv 0\quad\mbox{ on }\quad (0,L)\times(0,T).
\end{equation}
Therefore, this completes the proof.
\end{proof}

\begin{remark}
Observe that to apply \cite[Corollary 3]{nina} we need $Z\in L^\infty(0,T;[H^1(0,L)]^2)$, however, by using \cite[Proposition 3]{nina} with minor changes, we have $Z$ in the appropriate class.
\end{remark}

Now we are able to prove the main result of this paper.

\begin{proof}[Proof of Theorem \ref{main-dec}]
Proposition \ref{local-dec} guarantees the existence of a constant $\alpha > 0$, such that if $E_s(0) < 1$, the corresponding solution fulfill
$$E_s(t) \leq C' E_s(0) e^{-\alpha t},\quad \forall\, t> 0,$$
where $E_s(t)$ is defined by \eqref{6a}. Moreover, given $R > 0$ we obtain positive constants $C=C(R)$ and $\beta=\beta(R)$ such that
$$E_s(t) \leq C E_s(0) e^{-\beta t},\quad \forall\, t> 0,$$
whenever $E_s(0) < R$. Then, setting $T_R:=\beta^{-1}\ln(RC)$, we get
$$E_s(t) \leq C' E_s(T_R) e^{-\alpha(t-T_R)},\quad \forall\, t>T_R,$$
which ensures that
$$E_s(t) \leq C' C E_s(0) e^{\alpha T_R} e^{-\alpha t},\quad \forall\, t> 0.$$
This completes the proof and main Theorem is proved.
\end{proof}

\section{Extension Results: Stabilization for the critical case and weak solutions\label{sec4}}

\subsection{The critical case}

In this subsection we will follow the arguments due to \cite{linpa} to prove that for the critical case $a(u)=u^4$ we have the exponential decay of the total energy $E_s(t)$ in space $X=[L^2(0,L)]^2$, assuming that $||(u^0,v^0)||_X < < 1$. Moreover,  for $a(u)=u^p$, $p\in(2,4)$, the existence of weak solutions is also verified.

\subsubsection{Exponential decay}

Note that the energy dissipation law \eqref{dissipation}, as well as, \eqref{dec2} is still valid when $a(u)=u^4$. Then, the main result of this subsection can be read as follows:

\begin{theorem}\label{main-dec1} Consider $E_s(t)$ defined by \eqref{6a}. Then, there exist positive constants $C$ and $k$, such that for any
$(u^0,v^0)\in [L^2(0,L)]^2$ with
\[
E_s(0)\leq R_0,
\]
the corresponding solution $(u,v)$ of
\begin{equation}\label{1-1aa}
\left\{
\begin{array}{l}
\vspace{1mm}u_t+u_{xxx}+a_3v_{xxx}+u^4u_x + a_1vv_x + a_2(uv)_x + Gu =  0 \\
b_1v_t + rv_x + v_{xxx} + b_2a_3u_{xxx} + v^4v_x + b_2a_2uu_x + b_2a_1(uv)_x + Gv = 0,
\end{array}
\right.
\end{equation}
satisfying the following boundary conditions
\begin{equation}\label{gg2aa}
\begin{cases}
u(0,t)=0,\,\,u(L,t)=0,\,\,u_{x}(L,t)=0,\\
v(0,t)=0,\,\,v(L,t)=0,\,\,v_{x}(L,t)=0,
\end{cases}
\end{equation}
where $0<x<L$, $t>0$, $Gu$ and $Gv$ are defined by \eqref{operator_new} and \eqref{operator_new_1}, respectively, satisfies
\begin{equation}
E_s(t)\leq Ce^{-kt}E_s(0)\text{, }\quad \forall t\geq 0.
\label{m31}
\end{equation}
\end{theorem}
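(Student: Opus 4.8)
The plan is to follow the scheme of Proposition~\ref{local-dec} and Theorem~\ref{main-dec} (in the spirit of \cite{linpa}), the new feature at the critical exponent $p=4$ being that $u^4u_x$ can no longer be controlled for free in the smoothing estimate and the smallness of $E_s(0)$ must be used. First I would note that the dissipation identity \eqref{dissipation} is still valid for $a(u)=u^4$, so $E_s$ is nonincreasing and $\|(u(t),v(t))\|_X^2\le c\,E_s(0)\le c\,R_0$ for all $t\ge0$. Then I would establish, for small data, the analogue of the second estimate of Lemma~\ref{tech3}: multiplying the equations by $x(b_2u,v)$ and integrating by parts (using $u(L,t)=v(L,t)=u_x(L,t)=v_x(L,t)=0$) produces on the left positive multiples of $\|u_x\|_{L^2((0,T)\times(0,L))}^2$ and $\|v_x\|_{L^2((0,T)\times(0,L))}^2$, while the critical nonlinearity contributes, after one more integration by parts, a term of the form $c_1\int_0^T\!\!\int_0^L u^6\,dx\,dt$ with $c_1>0$ (and its $v$-analogue); by the one-dimensional Gagliardo--Nirenberg inequality $\int_0^T\!\!\int_0^L|u|^6\,dx\,dt\le C\|u\|_{L^\infty(0,T;L^2)}^4\|u\|_{L^2(0,T;H^1_0)}^2\le C R_0^2\|u_x\|_{L^2((0,T)\times(0,L))}^2$, so if $R_0$ is chosen small enough this term is absorbed on the left and one gets $\|(u,v)\|_{L^2(0,T;[H^1_0(0,L)]^2)}^2\le C(1+T)\|(u^0,v^0)\|_X^2$. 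With this bound the compactness procedure from the proof of Theorem~\ref{existence2} produces a global weak solution in $\mathcal C_w(\mathbb R;X)\cap L^2_{loc}(\mathbb R^+;[H^1(0,L)]^2)$.

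Next I would prove the observability inequality. As in Proposition~\ref{local-dec}, multiplying the first equation by $(T-t)b_2u$ and the second by $(T-t)v$ reduces the exponential decay to \eqref{dec1} (equivalently to \eqref{dec3}) with $a(u)=u^4$, and it is enough to prove the claim on $\tfrac{b_2}{b_1}\|u\|_{L^2(0,T;L^2)}^2+\|v\|_{L^2(0,T;L^2)}^2$. I would argue by contradiction: given a sequence of weak solutions $(u_n,v_n)$ with $\|(u_n^0,v_n^0)\|_X\le R_0$ violating \eqref{dec3}, set $\sigma_n=\|U_n\|_{L^2(0,T;X)}$ and $Z_n=U_n/\sigma_n=(y_n,w_n)$; the dissipation identity together with $\|(u_n^0,v_n^0)\|_X\le R_0$ shows $\sigma_n$ is bounded, so $\sigma_n\to\sigma\ge0$ along a subsequence, and, exactly as in \eqref{dec8}--\eqref{dec11}, $Z_n$ solves the rescaled system, $\|Z_n\|_{L^2(0,T;X)}=1$, the boundary and damping terms tend to $0$, $\|Z_n^0\|_X$ is bounded, and $Z_n\rightharpoonup Z$ weak-$*$ in $L^\infty(0,T;X)$ and weakly in $L^2(0,T;[H^1(0,L)]^2)$. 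The decisive point is the strong convergence: by Gagliardo--Nirenberg $y_n,w_n$ are bounded in $L^6((0,T)\times(0,L))$, so $y_n^5,w_n^5$ are bounded in $L^{6/5}((0,T)\times(0,L))$, hence the rescaled critical term $\tfrac15\sigma_n^4(y_n^5,w_n^5)_x$ is bounded in $L^{6/5}(0,T;[H^{-2}(0,L)]^2)$; together with the linear and quadratic terms this makes $\{Z_{n,t}\}$ bounded in $L^{6/5}(0,T;[H^{-2}(0,L)]^2)$, and \cite[Corollary 4]{simon} gives $Z_n\to Z$ strongly in $L^2(0,T;X)$, so $\|Z\|_{L^2(0,T;X)}=1$.

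To close the contradiction I would pass to the limit, using $|a(\sigma_n\mu)|=\sigma_n^4|\mu|^4\le C'(1+|\mu|^4)$ uniformly in $n$, so that $Z$ solves \eqref{dec14} with coefficient $\sigma$ in front of the nonlinear terms and no damping; the vanishing of $\|Gy_n\|_{L^2(\omega)}$ and $\|Gw_n\|_{L^2(\omega)}$ forces $y=s_1(t)$, $w=s_2(t)$ on $\omega\times(0,T)$, the first equation of \eqref{dec14a} gives $s_1'=s_2'=0$, and the unique continuation property \cite[Corollary 3]{nina} (applicable since $Z\in L^\infty(0,T;[H^1(0,L)]^2)$ by the bootstrap of \cite[Proposition 3]{nina}, the nonlinear coefficients being bounded) yields $Z\equiv\mathrm{const}$ on $(0,L)\times(0,T)$, whence $Z\equiv0$ because $Z(L,t)=0$, contradicting $\|Z\|_{L^2(0,T;X)}=1$. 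This proves \eqref{dec3}, hence the observability inequality; combined with \eqref{dissipation} it gives $\gamma=\gamma(R_0,T)\in(0,1)$ with $E_s(T)\le\gamma E_s(0)$ whenever $E_s(0)\le R_0$, and since $E_s$ is nonincreasing one has $E_s(nT)\le R_0$ for all $n$, so iterating over the intervals $[nT,(n+1)T]$ and using the monotonicity of $E_s$ upgrades this to \eqref{m31} with $k=-\tfrac1T\log\gamma$.

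The main obstacle is the a priori estimate in the first step: at $p=4$ the exponent $(8+2p)/(4-p)$ appearing in Lemma~\ref{tech3} is no longer meaningful, because the power of $\|u_x\|_{L^2((0,T)\times(0,L))}$ generated by the critical nonlinearity is exactly $2$ and therefore cannot be handled by Young's inequality as in the subcritical range; only the smallness of $E_s(0)$ makes that term absorbable, and only once this estimate is secured do the existence of the weak limit $Z$, the strong $L^2(0,T;X)$ convergence of the rescaled sequence, and the applicability of the unique continuation property become available.
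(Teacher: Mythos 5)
Your proposal is correct and follows essentially the same route as the paper: the dissipation law, the $x$-multiplier estimate in which Gagliardo--Nirenberg together with the smallness of $E_s(0)$ absorbs the critical term $\int\int u^6$, the $L^{6/5}(0,T;H^{-2}(0,L))$ bounds on $u^4u_x=\frac15\partial_x(u^5)$ and on $(u_t,v_t)$, the compactness--uniqueness contradiction argument ending with the unique continuation property of \cite{nina}, and the final iteration giving the exponential rate. The only (harmless) deviation is in passing to the limit in the critical term $w_n^4w_{n,x}$: the paper derives strong convergence in $L^4(0,T;L^4(0,L))$ from the interpolation bound in $L^4(0,T;H^{1/2}(0,L))$ and the compact embedding $H^{1/2}(0,L)\hookrightarrow L^4(0,L)$, whereas you combine the strong $L^2(0,T;X)$ (hence a.e.) convergence with the uniform $L^{6/5}$ bound on $w_n^5$ and the weak-limit identification already used in Theorem \ref{existence2}, which works equally well since $6/(p+1)=6/5>1$ at $p=4$.
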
 

\begin{proof}
To prove the exponential decay the following claim will be needed.

\vglue 0.2 cm

\noindent {\sc Claim.} \textit{For any $T>0$ and $R>0$, there exists a constant $C=C(R,T)>0$, such that
\begin{equation}
\begin{array}{l}\label{critical-2}
\vspace{1mm}\displaystyle\int^{T}_{0}\int^{L}_{0}u^{2}dxdt+\frac{b_{1}}{b_2}\int^{T}_{0}\int^{L}_{0}v^{2}dxdt\\
\vspace{1mm}\displaystyle\leq C(R,T)\Big(\displaystyle\int^{T}_{0}[(\sqrt{b_{2}}u_{x}(0,t)+\sqrt{a^{2}_{3}b_{2}}v_{x}(0,t))^{2}
    +\left(1-a^{2}_{3}b_{2}\right)v^{2}_{x}(0,t)]dt\\
+\displaystyle\int^{T}_{0}2(b_2||Gu||^2_{L^2(\omega)}+||Gv||^2_{L^2(\omega)})dt\Big),
\end{array}
\end{equation}
for any solution solution of \eqref{1-1aa}-\eqref{gg2aa}, whenever $||(u^0,v^0)||^2_{X} \leq R^2$.} 

\vglue 0.2 cm

To prove the previous claim we use the approach developed in the proof of the Proposition \ref{local-dec}. To use it, the following estimates are required.

\vglue 0.1 cm

\noindent{\bf Estimate I.} Multiplying the first equation in \eqref{1-1aa} by $xu$, the second by $xv$ and integrating by parts we obtain
\begin{equation}\label{critical-3}
\begin{array}{c}
\displaystyle\int_0^T\int_0^L (u^2_x + v^2_x) dx dt \leq
C\left[||(u^0,v^0)||_X^2 + \int_0^T\int_0^L (u^6 + v^6) dx dt
\right],
\end{array}
\end{equation}
where $C=C(T,L)$ is a positive. Now, we will bound $(u,v)$ in $L^2(0,T;[H^1_0(0,L)]^2$. By using the Gagliardo-Nirenberg inequality and \eqref{dissipation}, we get that
\begin{equation}\label{critical-4}
\begin{array}{l}
\vspace{1mm}\displaystyle\int_0^T\int_0^L u^6 dx dt \leq
C\,\displaystyle\int_0^T \|u(t)\|_{L^2(0,L)}^4
\|u_x(t)\|_{L^2(0,L)}^2 dt\\
\qquad\qquad\qquad\quad\leq
 C\,\|(u^0,v^0)\|^4_X\displaystyle\int_0^T \|u_x(t)\|^2_{L^2(0,L)} dt
\end{array}
\end{equation}
for some constant $C>0$. Analogously, we can estimate $\int_0^T\int_0^Lv^6 dx dt$. Thus, by \eqref{critical-3}, we have that
\begin{equation}\label{critical-5}
\begin{array}{c}
(1-C||(u^0,v^0)||^4_X)||(u,v)||^2_{L^2(0,T;[H^1_0(0,L)]^2)} \leq
C(T,L)||(u^0,v^0)||_X^2.
\end{array}
\end{equation}

\vglue 0.1 cm

\noindent{\bf Estimate II.} Now, we need bound $u_t$, respectively $v_t$. To do this, we we should have to pay some attention to the nonlinear term $u^4u_x=\displaystyle\frac{1}{5}\partial_x(u^5)$, respectively $v^4v_x$. First, observe that the argument used in \eqref{critical-4} give us
\begin{equation*}
\begin{array}{l}
\vspace{1mm}\displaystyle\int_0^T\int_0^L|u^5|^{6/5} dx dt\leq c\,\|u^0\|^4 \,\int_0^T
\|u_x(t)\|^2_{L^2(0,L)}dt\leq c\,\|(u^0,v^0)\|^4_X \,\int_0^T
\|u_x(t)\|^2_{L^2(0,L)}dt.
\end{array}
\end{equation*}
Therefore, from the \eqref{dissipation} and \eqref{critical-5}, the following holds $$\{u^5\}\,\, \mbox{is bounded in}\,\,L^{\frac{6}{5}}((0,T)\times (0,L)).$$ On the other hand, since $$L^{\frac{6}{5}}(0,L)\hookrightarrow H^{-1}(0,L),$$ we conclude that $$\{u^4u_x\}=\{\displaystyle\frac{1}{5}\partial_x(u^5)\}\,\, \mbox{is bounded in} \,\,L^{\frac{6}{5}}(0,T;H^{-2}(0,L)).$$ The result is also verified for $v^4v_x$.

\vglue 0.1 cm

\noindent{\bf Estimate III.}  Now, we can obtain a bound for $(u_t,v_t)$. Indeed, since
\begin{equation}
\left\{
\begin{array}{l}
\vspace{1mm}u_t= -(u_{xxx} +  a_3v_{xxx} + u^4u_x + a_1vv_x +
a_2(uv)_x + Gu), \nonumber\\
b_1v_t = -( v_x + v_{xxx} + b_2a_3u_{xxx} + v^4v_x + b_2a_2uu_x +
b_2a_1(uv)_x + Gv),\nonumber
\end{array}
\right.
\end{equation}
the second estimate allows to conclude that
\begin{equation}\label{critical-6}
(u_t,v_t)\,\, \mbox{is bounded
in}\,\,L^{\frac{6}{5}}(0,T;[H^{-2}(0,L)]^2).
\end{equation}

\vglue 0.1 cm

\noindent{\bf Estimate IV.} Finally, arguing by contradiction (see Proposition \ref{local-dec}), the main difficult is to pass to the limit in the nonlinear term $w_n^4w_{n,x}$ when $\{w_n\}_{n\in \mathbb{N}}$ is bounded in $L^2(0,T; H^1_0(0,L))\cap L^\infty(0,T; L^2(0,L))$. To deal with this nonlinear term, we prove that:

\vglue 0.2 cm

\noindent {\sc Claim.} {\it There exists $s>0$ such that
\[
\{w_n\}_{n\in \mathbb{N}} \text{ is bounded in } L^4(0,T; H^s(0,L)),
\]
the embedding $H^s(0,L)\hookrightarrow L^4(0,L)$ being compact.}

\vglue 0.2 cm

In fact, by interpolation we can deduce that $\{w_n\}$ is bounded in
$$
[L^q(0,T; L^2(0,L)),L^2(0,T; H^1_0(0,L))]_{\theta} = L^p(0,T;
[L^2(0,L),H^1_0(0,L)]_{\theta}),
$$
where $\frac{1}{p}=\frac{1-\theta}{q} + \frac{\theta}{2}$\, and
\,$0<\theta<1$. Thus, choosing $q=\infty$, $\theta=1/2$, so that
$p=4$, the claim holds with $s=1/2$, i.e.,
$$[L^2(0,L),H^1_0(0,L)]_{\frac{1}{2}}=H^\frac{1}{2}(0,L).$$
Furthermore, the embedding $H^\frac{1}{2}(0,L)\hookrightarrow
L^4(0,L)$ is compact. Therefore, from of the previous estimates and using a classical compactness result shown by \cite[Corollary 4]{simon}, we can extract a subsequence of $\{w_n\}_{n\in \mathbb{N}}$, still denoted by the same index $n$, such that
\begin{eqnarray}
w_n\rightarrow w \hbox{ strongly in } L^4(0,T; L^4(0,L)),
\end{eqnarray}
which allows us to pass to the limit in the nonlinear term. Then, arguing as in Theorem \ref{main-dec}, we deduce that $E_s(t)$ decays to zero exponentially.
\end{proof}

\subsubsection{Existence of weak solutions}

\begin{definition}
For $(u^{0},v^{0})\in X$ and $T>0$, we denote by a weak solution
of \eqref{1-1a}-\eqref{gg2a} any function $$u\in C_w([0,T];X)\cap L^2(0,T;[H^1(0,L)]^2)$$ which solves \eqref{1-1a}-\eqref{gg2a}, and such that, as $p\rightarrow 4$,
\begin{eqnarray*}
&&u_p \to u\,\,\mbox{weakly}\ast\,\,\mbox{in}\,\,
L^\infty (0,T;X),\\
&&u_p \to u\,\,\mbox{weakly}\,\,\mbox{in}\,\,
L^2(0,T; [H^1(0,L)]^2),
\end{eqnarray*}
$u_p$ denoting a solution of \eqref{1-1a}-\eqref{gg2a} (as given by
Theorem \ref{existence2}) for $a(x)=x^p$ and $2<p<4$.
\end{definition}

\begin{theorem}\label{main-dec2} For any $(u^0,v^0)\in [L^2(0,L)]^2$ there exists a weak solution of
\begin{equation}\label{1-1a2}
\left\{
\begin{array}{l}
\vspace{1mm}u_t+u_{xxx}+a_3v_{xxx}+u^pu_x + a_1vv_x + a_2(uv)_x + Gu =  0 \\
b_1v_t + rv_x + v_{xxx} + b_2a_3u_{xxx} + v^pv_x + b_2a_2uu_x + b_2a_1(uv)_x + Gv = 0,
\end{array}
\right.
\end{equation}
satisfying the following boundary conditions
\begin{equation}\label{gg2a2}
\begin{cases}
u(0,t)=0,\,\,u(L,t)=0,\,\,u_{x}(L,t)=0,\\
v(0,t)=0,\,\,v(L,t)=0,\,\,v_{x}(L,t)=0,
\end{cases}
\end{equation}
where $0<x<L$, $t>0$, $Gu$, $Gv$ are defined by \eqref{operator_new} and \eqref{operator_new_1}, and $2<p<4$.
\end{theorem}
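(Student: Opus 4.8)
The plan is to realize Theorem \ref{main-dec2} as a direct specialization of the general existence result, Theorem \ref{existence2}, by checking that the specific power nonlinearity meets its hypotheses for every fixed exponent in the stated range. First I would fix the meaning of the term $u^pu_x$ by taking $a(s)=|s|^{p-1}s$ (the odd extension, so that $a(0)=0$ and the structure matches \eqref{1-1a1}), and then verify that for $2<p<4$ this $a$ is of class $\mathcal{C}^2(\mathbb{R})$. A direct differentiation gives $a'(s)=p|s|^{p-1}$ and $a''(s)=p(p-1)|s|^{p-2}\operatorname{sgn}(s)$ for $s\neq 0$; since $p-1>1$ and $p-2>0$, both expressions extend continuously to the origin with $a'(0)=a''(0)=0$, so indeed $a\in\mathcal{C}^2(\mathbb{R})$.

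Next I would record the growth bounds
\[
|a(s)|=|s|^p\le C(1+|s|^p),\qquad |a'(s)|=p|s|^{p-1}\le C(1+|s|^{p-1}),\qquad |a''(s)|=p(p-1)|s|^{p-2}\le C(1+|s|^{p-2}),
\]
which are exactly the hypotheses of Theorem \ref{existence2} with exponent $p\in(2,4)\subset[1,4)$. With these verifications in hand, Theorem \ref{existence2} applies verbatim and yields, for every $(u^0,v^0)\in X=[L^2(0,L)]^2$, a function $(u,v)\in \mathcal{C}_w(\mathbb{R};X)\cap L^2_{loc}(\mathbb{R}^+;[H^1(0,L)]^2)$ solving \eqref{1-1a2}-\eqref{gg2a2} with the prescribed boundary conditions. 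This is precisely the weak solution asserted by the theorem.

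The only genuine point requiring attention is the $\mathcal{C}^2$-regularity of $a(s)=|s|^{p-1}s$ at $s=0$, and this is exactly where the hypothesis $p>2$ enters: for $1\le p\le 2$ the second derivative $|s|^{p-2}$ would fail to be continuous (or even bounded) at the origin, so the regularization-and-limit scheme of Theorem \ref{existence2}, which rests on $a\in\mathcal{C}^2$, could not be invoked for the pure power nonlinearity. Thus the range $2<p<4$ in the statement is the natural one: $p>2$ secures the regularity needed to run the existence machinery, while $p<4$ keeps the growth subcritical, so that the constraint $\alpha\le 6/(p+1)$ used in Step~1 of the proof of Theorem \ref{existence2} remains admissible with some $\alpha>1$ (this holds for every $p<5$, in particular for $p<4$). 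I do not expect any further obstacle: once the hypotheses of Theorem \ref{existence2} are checked, the construction of the approximating sequence $\{a_n\}$, the a priori bounds furnished by Lemma \ref{tech3}, and the passage to the limit in the nonlinear terms all transfer without change, so the proof reduces to the regularity and growth verification above.
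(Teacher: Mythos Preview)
Your reduction to Theorem \ref{existence2} is clean and, for each fixed $p\in(2,4)$, correctly produces a solution in $\mathcal{C}_w(\mathbb{R};X)\cap L^2_{loc}(\mathbb{R}^+;[H^1(0,L)]^2)$: the $\mathcal{C}^2$ check on $a(s)=|s|^{p-1}s$ (which uses $p>2$) and the growth bounds are right, and indeed the paper itself notes in the Definition immediately preceding the theorem that solutions for $a(x)=x^p$, $2<p<4$, are supplied ``as given by Theorem \ref{existence2}''. So as a proof of the statement read literally, your argument is valid and considerably more direct than the paper's.

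The paper, however, is doing something different, and the difference is tied to that special Definition of ``weak solution'' placed just before Theorem \ref{main-dec2}: there a weak solution is required not merely to lie in $C_w([0,T];X)\cap L^2(0,T;[H^1]^2)$ and solve the system, but also to arise as a weak/weak-$\ast$ limit of the family $\{u_p\}$ as $p\to 4$. Accordingly, the paper's proof is a compactness argument \emph{uniform in the parameter $p$}: multiplier estimates (Estimates I--III) give bounds on $(u_p,v_p)$ in $L^2(0,T;[H^1_0]^2)$ and on the time derivatives in $L^{6/5}(0,T;[H^{-2}]^2)$ that are independent of $p\in(2,4)$, and an interpolation step (Estimate IV) yields strong compactness in $L^4(0,T;L^4)$, so that one can extract a convergent subsequence as $p\to 4$. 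In short, the paper's argument is designed to manufacture the limit object (and is really pointed at the critical endpoint), whereas your argument treats a single fixed $p$ by quoting the general existence theorem. Both are legitimate for the stated range; just be aware that your route does not verify the limiting characterization that the paper has built into its notion of weak solution here, so it does not reproduce what the paper's proof actually establishes.
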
 

We follow the same steps of the previous estimates and for the sake of simplicity we drop the notation $u_p$ and use the notation $u$. 

\begin{proof}We will only present a sketch of the proof.

\vglue 0.1 cm

\noindent{\bf Estimate I.} Using the multipliers $xu$ and $xv$ the solution fulfill
\begin{equation}\label{critical-3-1}
\begin{array}{c}
\displaystyle\int_0^T\int_0^L (u^2_x + v^2_x) dx dt \leq
C\left[||(u^{0},v^{0})||_X^2 + \int_0^T\int_0^L (u^{p+2} + v^{p+2}) dx dt
\right],
\end{array}
\end{equation}
where $C=C(T,L)$ is a positive constant. By Gagliardo-Nirenberg inequality we obtain
\begin{equation}\label{critical-4-1}
\begin{array}{l}
\vspace{1mm}\displaystyle\int_0^T\int_0^L u^{p+2} dx dt \leq
C\,\displaystyle\int_0^T \|u(t)\|_{L^2(0,L)}^{(1-\frac{2}{p+2})(p+2)}
\|u_x(t)\|_{L^2(0,L)}^2 dt \\ 
\qquad\qquad\qquad\qquad\leq
C\,\|(u^{0},v^{0})\|^p_X\displaystyle\int_0^T \|u_x(t)\|^2_{L^2(0,L)} dt,
\end{array}
\end{equation}
for some constant $C>0$ that does not depend on $p$. The same holds to 
$\int_0^T\int_0^L v^{p+2} dx dt$. The above estimate and \eqref{critical-3-1} give us that
\begin{equation}\label{critical-5-1}
\begin{array}{c}
(1-||(u^{0},v^{0})||^p_X)||(u,v)||^2_{L^2(0,T;[H^1_0(0,L)]^2)} \leq
C\,||(u^{0},v^{0})||_X^2.
\end{array}
\end{equation}

\vglue 0.1 cm

\noindent{\bf Estimate II.} We are interested to bound the term $u^pu$. By the previous subsection and using \eqref{critical-4-1}-\eqref{critical-5-1}, we deduce that
$$\{u^{p+1}\}\,\, \mbox{is bounded in}\,\,L^{\frac{p+2}{p+1}}((0,T)\times (0,L)),$$
with a bound uniform in $p$. Therefore,
$$\{(p+1)u^p u_x\}=\{\partial_x(u^{p+1})\}\,\, \mbox{is bounded in}
\,\,L^{\frac{p+2}{p+1}}(0,T;H^{-2}(0,L)),$$
i.e.,
$$\{u^p u_x\}=\{\partial_x(u^{p+1})\}\,\, \mbox{is bounded in}
\,\,L^{\frac{p+2}{p+1}}(0,T;H^{-2}(0,L))\subset L^{\frac{6}{5}}(0,T;H^{-2}(0,L)),$$
since $p$ is intended to go to $4$ and $\frac{p+2}{p+1}>\frac{6}{5}$. The same holds to $v^pv_x$.

\vglue 0.1 cm

\noindent{\bf Estimate III.} Combining the equations in \eqref{1-1a2} and the previous estimates, we deduce that
$$(u_t,v_t)\,\,\mbox{is bounded in} \,\,L^{\frac{p+2}{p+1}}(0,T;[H^{-2}(0,L)]^2)\subset L^{\frac{6}{5}}(0,T;H^{-2}(0,L)),$$
with a bound uniform in $p$.

\vglue 0.1 cm

\noindent{\bf Estimate IV.} Finally, to deal with to the nonlinear term we claim  that the following hold:

\vglue 0.2 cm

\noindent {\sc Claim.} {\it There exists $s>0$ such that $\{u_p\}$ is
bounded in $L^4(0,T; H^s(0,L))$, the embedding
$$H^s(0,L)\hookrightarrow L^4(0,L)$$ being compact.}

\vglue 0.2 cm

As made in the previous subsection and using interpolation the results holds. Indeed, since $\frac{1}{p}=\frac{1-\theta}{q} + \frac{\theta}{2}$,
choosing $\theta=\frac{1}{2}$ and 
$q=\infty$ we obtain the claim with $s=\frac{1}{2}$.

Due to the statement above and classical compactness results due to \cite[Corollary 4]{simon}, we can extract a subsequence of $\{u_p\}$, still denoted by $\{u_p\}$, and a function $$u\in L^\infty(0,T; X)\cap L^2(0,T; [H^1(0,L)]^2),$$ such that $ u_p\rightarrow u$, as $p\rightarrow 4$, in the sense described above. Therefore, the result is archived.
\end{proof}

\section{Further comment\label{sec5}}
\subsection{Only one damping mechanism} 

Note that the energy dissipation law \eqref{dissipation}, as well as, \eqref{dec2} is still valid for only one damping mechanism $Gu$ (or $Gv$). So it is natural to believe that the same method developed here should ensures the exponential decay of solution of the system \eqref{1-1a}-\eqref{gg2a} with only $Gu$ (or $Gv$). However, we can not apply directly the ideas contained in the proof of Proposition \ref{local-dec} because that the unique continuation property proved in \cite{nina} it is valid when:  $$(u,v)=0 \text{ in } \omega \text{ implies } (u,v)=0 \text{ in } (0,T)\times(0,L),$$ due the Carleman estimate proved by the authors in \cite[Theorem 3.1]{nina}. Thus, to get the result with one damping mechanism a new Carleman estimate will be need with only one observation in $u$ (or $v$). Therefore, the following problem remains open:

\vglue 0.2 cm

\noindent\textbf{Open problem}\textit{ Is the system \eqref{1-1a}-\eqref{gg2a} with one damping mechanism $Gu$ (or $Gv$) asymptotically stable as $t\rightarrow + \infty$ ?}

\vglue 0.2 cm

\noindent\textbf{Acknowledgments:} The author thank the anonymous referee for their helpful comments and suggestions.

\end{document}